\newtheorem{defn}{Definition}[section]
\newtheorem{prop}[defn]{Proposition}
\newtheorem{theo}[defn]{Theorem}
\newtheorem{lem}[defn]{Lemma}
\newtheorem{cor}[defn]{Corollary}
\newtheorem{rmk}[defn]{Remark}
\newtheorem*{claim*}{Claim}
\newtheorem*{just*}{Justification}
\newtheorem*{lem*}{Lemma}
\newcommand{\T}{\mathbb{T}}
\newcommand{\la}{\langle}
\newcommand{\ra}{\rangle}
\newcommand{\Term}{\mathsf{Term}}
\newcommand{\x}{\mathsf{x}}
\newcommand{\Sort}{\mathsf{Sort}}
\newcommand{\Fun}{\mathsf{Fun}}
\newcommand{\E}{\mathcal{E}}
\newcommand{\Z}{\mathcal{Z}}
\newcommand{\Tmod}{\T\mathsf{mod}}
\newcommand{\cod}{\mathsf{cod}}
\newcommand{\dom}{\mathsf{dom}}
\newcommand{\id}{\mathsf{id}}
\newcommand{\Aut}{\mathsf{Aut}}
\newcommand{\J}{\mathcal{J}}
\newcommand{\Group}{\mathsf{Group}}
\newcommand{\op}{\mathsf{op}}
\newcommand{\Set}{\mathsf{Set}}
\newcommand{\C}{\mathbb{C}}
\newcommand{\D}{\mathbb{D}}
\newcommand{\Ob}{\mathsf{Ob}}
\newcommand{\Arr}{\mathsf{Arr}}
\newcommand{\fraka}{\mathfrak{a}}
\newcommand{\frakb}{\mathfrak{b}}
\newcommand{\card}{\mathsf{card}}
\newcommand{\Sigmasite}{\Sigma^{(\C, \J)}}
\newcommand{\Tsite}{\T^{(\C, \J)}}
\newcommand{\Sh}{\mathsf{Sh}}
\newcommand{\yo}{\mathsf{y}}
\newcommand{\as}{\mathsf{a}}
\newcommand{\K}{\mathcal{K}}
\newcommand{\Horn}{\mathsf{Horn}}
\newcommand{\Dom}{\mathsf{Dom}}
\newcommand{\inn}{\mathsf{inn}}
\begin{document}

\title{Covariant Isotropy of Grothendieck Toposes}
\author{Jason Parker}

\maketitle

\begin{abstract}
We provide an explicit characterization of the \emph{covariant isotropy group} of any Grothendieck topos, i.e. the group of \emph{(extended) inner automorphisms} of any sheaf over a small site. As a consequence, we obtain an explicit characterization of the \emph{centre} of a Grothendieck topos, i.e. the automorphism group of its identity functor. 
\end{abstract}

\section{Introduction}

For any category $\C$, one can define its \emph{covariant isotropy group (functor)}
\[ \Z_\C : \C \to \Group, \] which sends any object $C \in \C$ to the group of natural automorphisms of the projection functor $C/\C \to \C$ from the slice category under $C$, and is one way of `functorializing' the assignment $C \mapsto \Aut(C)$, which is not functorial in general (unless $\C$ is a groupoid). Concretely, if we define $\mathsf{Dom}(C) := \{ f \in \Arr(\C) : \mathsf{dom}(f) = C\}$, then an element $\pi \in \Z_\C(C)$ is a $\mathsf{Dom}(C)$-indexed family of automorphisms \[ \pi = \left(\pi_f : \mathsf{cod}(f) \xrightarrow{\sim} \mathsf{cod}(f)\right)_{f \in \mathsf{Dom}(C)} \] with the following naturality property: for any $f : C \to C'$ and $f' : C' \to C''$ in $\C$ we have
\[ \pi_{f' \circ f} \circ f' = f' \circ \pi_f, \] as in the following commutative square:
% https://q.uiver.app/?q=WzAsNCxbMCwwLCJDJyJdLFsyLDAsIkMnIl0sWzAsMiwiQycnIl0sWzIsMiwiQycnIl0sWzAsMSwiXFxwaV9mIl0sWzAsMiwiZiciLDJdLFsyLDMsIlxccGlfe2YnIFxcY2lyYyBmfSIsMl0sWzEsMywiZiciXV0=
\[\begin{tikzcd}
	{C'} && {C'} \\
	\\
	{C''} && {C''}
	\arrow["{\pi_f}", from=1-1, to=1-3]
	\arrow["{f'}"', from=1-1, to=3-1]
	\arrow["{\pi_{f' \circ f}}"', from=3-1, to=3-3]
	\arrow["{f'}", from=1-3, to=3-3]
\end{tikzcd}\]
Roughly speaking, an element $\pi \in \Z_\C(C)$ provides an automorphism $\pi_{\id_C} : C \xrightarrow{\sim} C$ that can be coherently or functorially \emph{extended} along any morphism out of $C$. 

The central motivation for studying covariant isotropy is that it encodes a notion of \emph{inner automorphism} or \emph{conjugation} for a category, which can be understood as follows. For any group $G$ and $g \in G$, let $\inn_g : G \xrightarrow{\sim} G$ be the inner automorphism of $G$ with $\inn_g(x) := gxg^{-1}$ for any $x \in G$. It is easy to verify that any inner automorphism of $G$ induces an element $\pi \in \Z_\Group(G)$: given an inner automorphism $\inn_g : G \xrightarrow{\sim} G$, one simply defines $\pi_f := \inn_{f(g)} : H \xrightarrow{\sim} H$ for any group homomorphism $f : G \to H$. Far less obvious is the fact, proven by George Bergman in \cite[Theorem 1]{Bergman}, that for any $\pi \in \Z_\Group(G)$, there is a unique element $g \in G$ with $\pi_{\id_G} = \inn_g$. In other words, the inner automorphisms are \emph{precisely} those group automorphisms that can be coherently or functorially extended along morphisms out of their domains. This provides a purely \emph{categorical} formulation of the notion of inner automorphism, which makes sense in any category. Motivated by these considerations, we may then refer to the elements of $\Z_\C(C)$ as the \emph{extended inner automorphisms} of $C \in \Ob(\C)$, while an \emph{inner automorphism} of $C$ is an automorphism $f : C \xrightarrow{\sim} C$ for which there is some extended inner automorphism $\pi \in \Z_\C(C)$ with $\pi_{\id_C} = f$.    

 In \cite{MFPSpaper}, the author and his collaborators studied the covariant isotropy group of the category $\Tmod$ of models of a single-sorted algebraic theory $\T$, and showed that it encodes the usual notion of inner automorphism for $\T$, if it has one. It is shown therein that the covariant isotropy group of $M \in \Tmod$ may be described in terms of the elements of $M\la \x \ra$ (the $\T$-model obtained from $M$ by freely adjoining an indeterminate element $\x$) that are \emph{substitutionally invertible} and \emph{commute generically with} the operations of $\T$. In the author's recent PhD thesis \cite{thesis}, this analysis was extended to the category of models of any finitary \emph{quasi-equational} or \emph{essentially algebraic} theory in the sense of \cite{Horn}, which is a multi-sorted equational theory in which certain operations may only be \emph{partially} defined. Generalizing the results from \cite{MFPSpaper}, it is shown in \cite{thesis} that the covariant isotropy group of a model $M$ of a finitary quasi-equational theory $\T$ can be described in terms of sort-indexed families in $\prod_C M\la \x_C \ra$ that are substitutionally invertible and commute generically with the operations of $\T$, where $M\la \x_C \ra$ is the $\T$-model obtained from $M$ by freely adjoining a new element $\x_C$ of sort $C$. This analysis then yielded explicit characterizations of the covariant isotropy groups of small strict monoidal categories and presheaf toposes. 

Generalizing from the latter class of examples, in the present paper we characterize the covariant isotropy group of any \emph{Grothendieck topos}. Recall that a Grothendieck topos is a category equivalent to the category $\Sh(\C, \J)$ of sheaves on a small site $(\C, \J)$. It is well-known that any Grothendieck topos is \emph{locally presentable} (cf. e.g. \cite[Proposition 3.4.16]{Borceux}), and hence is equivalent to the category of models of an (infinitary) essentially algebraic or quasi-equational theory (cf. e.g. \cite[Theorem 3.36]{LPAC}). To characterize the covariant isotropy group of any such topos, we therefore wish to use the logical methods for characterizing covariant isotropy developed in \cite{thesis}, which in particular rely on the result that any finitary quasi-equational theory $\T$ has an initial model, as explicitly and constructively proven in \cite[Theorem 22]{Horn} by the use of \emph{term models}. It is well-known that any infinitary essentially algebraic theory also has an initial model, provided that the theory is suitably \emph{bounded} by a regular cardinal (cf. again \cite[Theorem 3.36]{LPAC}), assuming a bounded version of the axiom of choice. Knowing this, we therefore first show in Section \ref{logicalbackground} that any bounded infinitary quasi-equational theory has an initial \emph{term model} as in \cite[Theorem 22]{Horn} (assuming a bounded axiom of choice), and we then use this fact to show that the logical methods for characterizing covariant isotropy developed in \cite{thesis} can be extended from finitary to bounded infinitary quasi-equational theories. In Section \ref{toposes}, we then show that any Grothendieck topos $\E$ is the category of models for a particular bounded infinitary quasi-equational theory, and then use the logical characterization of covariant isotropy for such theories developed in Section \ref{logicalbackground} to characterize the covariant isotropy of $\E$. As a consequence, we then deduce an explicit characterization of the \emph{centre} of $\E$, i.e. the automorphism group of its identity functor, since the centre of $\E$ is isomorphic to the covariant isotropy group of its initial object.     

\section{Covariant Isotropy of Locally Presentable Categories}
\label{logicalbackground}

In this section, we show that the techniques developed in \cite{thesis} for computing the covariant isotropy group of the category of models of any finitary quasi-equational theory, equivalently any locally finitely presentable category, can be extended to the category of models of any bounded infinitary quasi-equational theory, equivalently any locally presentable category. So we first define bounded infinitary quasi-equational theories, discuss their set-theoretic semantics, and show that they admit initial term models, assuming a bounded axiom of choice. The material in this section borrows heavily from \cite{Horn}, extended to the bounded infinitary context. 

First, we define the notion of a bounded infinitary \emph{first-order signature}. Recall that a \emph{regular cardinal} is a cardinal that is not the sum of any smaller number of smaller cardinals. 

\begin{defn}
\label{signature}
{\em 
Let $\lambda$ be a regular cardinal. A $\lambda$\emph{-ary (first-order) signature} $\Sigma$ is a pair of sets $\Sigma = (\Sigma_{\mathsf{Sort}}, \Sigma_{\mathsf{Fun}})$ such that:
\begin{itemize}
\item $\Sigma_{\mathsf{Sort}}$ is the set of \emph{sorts} of $\Sigma$.

\item $\Sigma_{\mathsf{Fun}}$ is the set of \emph{function symbols} of $\Sigma$. Each element $f \in \Sigma_{\mathsf{Fun}}$ comes equipped with a pair $\left((A_i)_{i < \fraka}, A\right)$, where $\fraka < \lambda$ is a cardinal number and $A, A_i$ are sorts for all $i < \fraka$. We write this as $f : \prod_{i < \fraka} A_i \to A$. In case $\fraka = 0$, we write $f : A$.
\end{itemize}
\noindent A \emph{bounded infinitary} signature is a $\lambda$-ary signature for some regular cardinal $\lambda$. \qed
}
\end{defn} 

\noindent Next, we define the class of \emph{terms} over a given $\lambda$-ary signature. Note that the class of terms over a $\lambda$-ary signature will in fact constitute a \emph{set} rather than a proper class, since there is only a \emph{set} of function symbols and the arity of each function symbol is bounded by the regular cardinal $\lambda$. 

\begin{defn}
\label{terms}
{\em Let $\Sigma$ be a $\lambda$-ary signature for some regular cardinal $\lambda$. For every sort $A \in \Sigma_{\mathsf{Sort}}$, we assume that we have a set $V_A$ of variables of sort $A$ of cardinality $\lambda$. We now define the set $\mathsf{Term}(\Sigma)$ of \emph{terms} of $\Sigma$ recursively as follows, while simultaneously defining the \emph{sort} and the set $\mathsf{FV}(t)$ of \emph{free variables} of a term $t \in \mathsf{Term}(\Sigma)$:
\begin{itemize}
\item If $A \in \Sigma_{\mathsf{Sort}}$ and $x \in V_A$, then $x \in \mathsf{Term}(\Sigma)$ is of sort $A$ with $\mathsf{FV}(x) := \{x\}$.

\item If $f : \prod_{i < \fraka} A_i \to A$ is a function symbol of $\Sigma$ and $t_i \in \mathsf{Term}(\Sigma)$ with $t_i : A_i$ for each $i < \fraka$, then $f\left[(t_i)_{i < \fraka}\right] \in \mathsf{Term}(\Sigma)$ is of sort $A$, and $\mathsf{FV}\left(f\left[(t_i)_{i < \fraka}\right]\right) := \bigcup_{i < \fraka} \mathsf{FV}(t_i)$. In particular, if $c$ is a constant symbol of sort $A$, then $c$ is a term of sort $A$ with $\mathsf{FV}(c) = \emptyset$.
\end{itemize}

\noindent If $t \in \mathsf{Term}(\Sigma)$ and $\mathsf{FV}(t) = \emptyset$, then we call $t$ a \emph{closed} term. If $t \in \mathsf{Term}(\Sigma)$, then we write $t(\vec{x})$ to mean that $\mathsf{FV}(t) \subseteq \vec{x}$. 
\qed
}
\end{defn}

\begin{defn}
\label{Hornformulas}
{\em Let $\Sigma$ be a $\lambda$-ary signature for some regular cardinal $\lambda$. We define the class $\mathsf{Horn}(\Sigma)$ of \emph{Horn formulas} over $\Sigma$ recursively as follows, while simultaneously defining the set $\mathsf{FV}(\varphi)$ of \emph{free variables} of a formula $\varphi \in \mathsf{Horn}(\Sigma)$:
\begin{itemize}
\item If $t_1, t_2 \in \mathsf{Term}(\Sigma)$ are terms of the same sort, then $t_1 = t_2 \in \mathsf{Horn}(\Sigma)$, and $\mathsf{FV}(t_1 = t_2) := \mathsf{FV}(t_1) \cup \mathsf{FV}(t_2)$.

\item $\top \in \mathsf{Horn}(\Sigma)$ (the empty conjunction), and $\mathsf{FV}(\top) := \emptyset$.

\item If $I$ is a set of cardinality $< \lambda$ and $\varphi_i \in \mathsf{Horn}(\Sigma)$ for each $i \in I$, then $\bigwedge_{i \in I} \varphi_i \in \mathsf{Horn}(\Sigma)$, and $\mathsf{FV}\left(\bigwedge_{i \in I} \varphi_i\right) := \bigcup_{i \in I} \mathsf{FV}(\varphi_i)$.
\end{itemize}
If $\varphi \in \mathsf{Horn}(\Sigma)$ and $\mathsf{FV}(\varphi) = \emptyset$, then we will refer to $\varphi$ as a (Horn) \emph{sentence}. If $\varphi \in \mathsf{Horn}(\Sigma)$, then we will write $\varphi(\vec{x})$ to mean that $\mathsf{FV}(\varphi) \subseteq \vec{x}$. \qed
}
\end{defn}

\begin{defn}
\label{sequent}
{\em Let $\Sigma$ be a $\lambda$-ary signature for some regular cardinal $\lambda$. A $\lambda$\emph{-ary Horn sequent} over $\Sigma$ is an expression of the form $\varphi \vdash^{\vec{x}} \psi$, where $\varphi, \psi \in \mathsf{Horn}(\Sigma)$ and $\mathsf{FV}(\varphi), \mathsf{FV}(\psi) \subseteq \vec{x}$ and $\vec{x}$ contains fewer than $\lambda$ variables. \qed
}
\end{defn}

\begin{defn}
{\em 
Let $\lambda$ be a regular cardinal. A $\lambda$-\emph{ary quasi-equational theory} $\T$ is a set of $\lambda$-ary Horn sequents over a $\lambda$-ary signature $\Sigma$. A \emph{bounded infinitary quasi-equational theory} is a $\lambda$-ary quasi-equational theory for some regular cardinal $\lambda$.} \qed
\end{defn}

One can now set up a deduction system of \emph{partial Horn logic} for bounded infinitary quasi-equational theories, wherein certain Horn sequents are designated as logical axioms, and there are logical inference rules allowing one to deduce certain Horn sequents from other Horn sequents. We refer the reader to \cite{Horn} for a list of all the specific logical axioms and inference rules of partial Horn logic, which must of course be generalized from the finitary context to the bounded infinitary context. The main distinguishing feature of this deduction system is that equality of terms is \emph{not} assumed to be reflexive, i.e. if $t(\vec{x})$ is a term over a given signature, then $\top \vdash^{\vec{x}} t(\vec{x}) = t(\vec{x})$ is \emph{not} a logical axiom of partial Horn logic, unless $t$ is a variable. In other words, if we abbreviate the equation $t = t$ by $t \downarrow$ (read: $t$ \emph{is defined}), then unless $t$ is a variable, the sequent $\top \vdash^{\vec{x}} t \downarrow$ is \emph{not} a logical axiom of partial Horn logic. 

If $\T$ is a $\lambda$-ary quasi-equational theory over a $\lambda$-ary signature $\Sigma$ and $\varphi \vdash^{\vec{x}} \psi$ is a $\lambda$-ary Horn sequent over $\Sigma$, then we say that the sequent $\varphi \vdash^{\vec{x}} \psi$ is \emph{provable} in $\T$ if there is a sequence of $\lambda$-ary Horn sequents (of length $< \lambda$) whose last member is $\varphi \vdash^{\vec{x}} \psi$, and each member of the sequence is either a logical axiom of partial Horn logic, an axiom of $\T$, or is obtained from previous elements of the sequence by an inference rule of partial Horn logic. We also say that $\T$ \emph{proves} the sequent $\varphi \vdash^{\vec{x}} \psi$, or that this sequent is a \emph{theorem} of $\T$. If $\T$ proves a Horn sequent of the form $\top \vdash^{\vec{x}} \varphi$, then we usually write this as $\T \vdash^{\vec{x}} \varphi$.    

We now review the set-theoretic semantics of partial Horn logic. We recall that if $A$ and $B$ are any sets, then a \emph{partial function} $f : A \rightharpoondown B$ is a total function $f : \mathsf{dom}(f) \to B$ with $\mathsf{dom}(f) \subseteq A$.  

\begin{defn}
{\em Let $\Sigma$ be a $\lambda$-ary signature for some regular cardinal $\lambda$. A (set-based) \emph{partial} $\Sigma$\emph{-structure} $M$ is given by the following data:

\begin{enumerate}

\item For every $A \in \Sigma_\Sort$, a set $M_A$. 

\item For every function symbol $f : \prod_{i < \fraka} A_i \to A$ of $\Sigma$, a \emph{partial function} $f^M : \prod_{i < \fraka} M_{A_i} \rightharpoondown M_{A}$. \qed
\end{enumerate}
}
\end{defn}

\begin{defn}
{\em Let $\Sigma$ be a $\lambda$-ary signature for some regular cardinal $\lambda$, and let $M$ and $N$ be partial $\Sigma$-structures. A $\Sigma$\emph{-morphism} $h : M \to N$ is a $\Sigma_{\mathsf{Sort}}$-indexed sequence of \emph{total} functions $h = (h_A : M_A \to N_A)_{A \in \Sigma_\Sort}$ satisfying the following condition:
\begin{itemize}

\item For any function symbol $f : \prod_{i < \fraka} A_i \to A$ in $\Sigma$ and any $(m_i)_{i < \fraka} \in \prod_{i < \fraka} M_{A_i}$, if $(m_i)_{i < \fraka} \in \mathsf{dom}\left(f^M\right)$, then $\left(h_{A_i}(m_i)\right)_{i < \fraka} \in \mathsf{dom}\left(f^N\right)$ and \[ h_A\left(f^M\left[(m_i)_{i < \fraka}\right]\right) = f^N\left(\left[h_{A_i}(m_i)\right]_{i < \fraka}\right) \in N_A. \]
\end{itemize} \qed }
\end{defn}

It is easy to verify that the (componentwise) composition of $\Sigma$-morphisms is a $\Sigma$-morphism, and that the sequence of identity functions $(\mathsf{id}_A : M_A \to M_A)_A$ is a $\Sigma$-morphism $\mathsf{id} : M \to M$ that is an identity for composition. So we can form the category $\mathsf{P}\Sigma\mathsf{Str}$ of partial $\Sigma$-structures and $\Sigma$-morphisms. 

Before we can define the notion of a (set-based) model of a bounded infinitary quasi-equational theory, we must first define the interpretations of terms and Horn formulas in partial structures.

\begin{defn}
{\em Let $\Sigma$ be a $\lambda$-ary signature for some regular cardinal $\lambda$. Let $t\left[(x_i)_{i < \fraka}\right] : A$ be an element of $\mathsf{Term}(\Sigma)$ with free variables among $x_i : A_i$ for $i < \fraka$, where $\fraka < \lambda$ is a cardinal. Let $M$ be a partial $\Sigma$-structure. We define the \emph{partial} function \[ t\left[(x_i)_{i < \fraka}\right]^M : \prod_{i < \fraka} M_{A_i} \rightharpoondown M_A \] by induction on the structure of $t$: 
\begin{itemize}
\item If $t \equiv x_i : A_i$ for some $i < \fraka$, then we set $t^M := \pi_i : \prod_{i < \fraka} M_{A_i} \to M_{A_i}$, the (total) projection onto the $i^{\mathsf{th}}$ factor.

\item If $t \equiv f\left[(t_j)_{j < \mathfrak{b}}\right] : B$ for some function symbol $f : \prod_{j < \frakb} B_j \to B$ of $\Sigma$ with $t_j \in \mathsf{Term}(\Sigma)$ and $t_j : B_j$ for each $j < \frakb$, we first set
\begin{align*}
\mathsf{dom}\left(t^M\right)	&:= \left\{ \vec{a} \in \bigcap_{j < \frakb} \mathsf{dom}\left(t_j^M\right) : \left(t_j^M(\vec{a})\right)_{j < \frakb} \in \mathsf{dom}\left(f^M\right)\right\},
\end{align*} 
and for any $\vec{a} \in \mathsf{dom}\left(t^M\right)$ we set \[ t^M(\vec{a}) := f^M\left(\left[t_j^M(\vec{a})\right]_{j < \frakb}\right) \in M_B, \] which defines $t^M = f\left[(t_j)_{j < \mathfrak{b}}\right]^M : \prod_{i < \fraka} M_{A_i} \rightharpoondown M_B$. \qed 
\end{itemize}
}
\end{defn}

\begin{defn}
{\em Let $\Sigma$ be a $\lambda$-ary signature for some regular cardinal $\lambda$, and let $\varphi\left[(x_i)_{i < \fraka}\right]$ be a Horn formula over $\Sigma$ with free variables among $x_i : A_i$ for $i < \fraka$ (where $\fraka < \lambda$ is a cardinal). Let $M$ be a partial $\Sigma$-structure. We define \[ \varphi\left[(x_i)_{i < \fraka}\right]^M \subseteq \prod_{i < \fraka} M_{A_i} \] by induction on the structure of $\varphi$: 
\begin{itemize}
\item If $\varphi \equiv t_1 = t_2$ for some terms $t_1, t_2$ of the same sort, then \[ \varphi^M = (t_1 = t_2)^M := \left\{ \vec{a} \in \mathsf{dom}\left(t_1^M\right) \cap \mathsf{dom}\left(t_2^M\right) : t_1^M(\vec{a}) = t_2^M(\vec{a})\right\}. \]

\item If $\varphi \equiv \top$, then $\top^M := \prod_{i < \fraka} M_{A_i}$.

\item If $\varphi \equiv \bigwedge_{k \in K} \varphi_k$ for some set $K$ with cardinality $< \lambda$ and Horn formulas $\varphi_k \in \Horn(\Sigma)$ for each $k \in K$, then \[ \left(\bigwedge_{k \in K} \varphi_k\right)^M := \bigcap_{k \in K} \varphi_k^M \subseteq \prod_{i < \fraka} M_{A_i}. \]
\end{itemize} \qed }
\end{defn}

\begin{defn}
{\em Let $\Sigma$ be a $\lambda$-ary signature for some regular cardinal $\lambda$, let $M$ be a partial $\Sigma$-structure, and let $\varphi(\vec{x}), \psi(\vec{x})$ be Horn formulas over $\Sigma$. Then $M$ \emph{models} or \emph{satisfies} the Horn sequent $\varphi \vdash^{\vec{x}} \psi$ if $\varphi(\vec{x})^M \subseteq \psi(\vec{x})^M$. \qed }
\end{defn}

\begin{defn}
{\em Let $\T$ be a $\lambda$-ary quasi-equational theory over a $\lambda$-ary signature $\Sigma$ for some regular cardinal $\lambda$, and let $M$ be a partial $\Sigma$-structure. Then $M$ is a \emph{model} of $\T$ if $M$ satisfies every axiom of $\T$. \qed }
\end{defn}

\noindent For a bounded infinitary quasi-equational theory $\T$ over a signature $\Sigma$, we now let $\Tmod$ be the full subcategory of $\mathsf{P}\Sigma\mathsf{Str}$ on the models of $\T$. 

In order to sketch the details of the Initial Model Theorem for bounded infinitary quasi-equational theories (cf. \cite[Theorem 22]{Horn}), we first require the following definitions.

\begin{defn}
{\em Let $\Sigma$ be a $\lambda$-ary signature for some regular cardinal $\lambda$ and $M$ a partial $\Sigma$-structure. For every sort $A$, let $\sim_A$ be an equivalence relation on $M_A$. Then the $\Sigma_{\mathsf{Sort}}$-indexed family of equivalence relations $(\sim_A)_A$ is a \emph{partial congruence} on $M$ if the following condition is satisfied:
\begin{itemize}
\item For every function symbol $f : \prod_{i < \fraka} A_i \to A$ in $\Sigma$ and all $\vec{a}, \vec{b} \in \prod_{i < \fraka} M_{A_i}$, if 
$a_i \sim_{A_i} b_i$ for all $i < \fraka$, then $\vec{a} \in \mathsf{dom}\left(f^M\right)$ iff $\vec{b} \in \mathsf{dom}\left(f^M\right)$ and \[ \vec{a}, \vec{b} \in \mathsf{dom}\left(f^M\right) \Longrightarrow f^M(\vec{a}) \sim_A f^M\left(\vec{b}\right). \] \qed
\end{itemize}
}
\end{defn}

\noindent We now have the following definition, which requires a bounded version of the axiom of choice for $\lambda$-small families of non-empty sets, which we call the $\lambda$\emph{-ary axiom of choice}.   

\begin{defn}
\label{partialcongruencestructure}
{\em Let $\Sigma$ be a $\lambda$-ary signature for some regular cardinal $\lambda$, and let $M$ be a partial $\Sigma$-structure. Let $\sim \ = (\sim_A)_A$ be a partial congruence on $M$. We define the \emph{partial quotient} $\Sigma$\emph{-structure} $M/{\sim}$ as follows: 

\begin{itemize}
\item For every sort $A \in \Sigma$, we set $(M/{\sim})_A := M_A/{\sim_A}$, the set of equivalence classes of $M_A$ modulo the equivalence relation $\sim_A$.

\item Assuming the $\lambda$-ary axiom of choice, for any function symbol $f : \prod_{i < \fraka} A_i \to A$ we set \[ \mathsf{dom}\left(f^{M/{\sim}}\right) := \left\{ \left([a_i]\right)_{i < \fraka} \in \prod_{i < \fraka} M_{A_i}/{\sim_{A_i}} : (a_i)_{i < \fraka} \in \mathsf{dom}\left(f^M\right) \right\}. \] Then for any $\left([a_i]\right)_{i < \fraka} \in \mathsf{dom}\left(f^{M/{\sim}}\right)$, we set \[ f^{M/{\sim}}\left(\left([a_i]\right)_{i < \fraka}\right) := \left[f^M\left[(a_i)_{i < \fraka}\right]\right]. \]

\end{itemize}
Because $\sim$ is a partial congruence on $M$, it easily follows that $M/{\sim}$ is a well-defined partial $\Sigma$-structure. \qed }
\end{defn}

\begin{rmk}
{\em
Let us comment briefly on the use of a bounded version of the axiom of choice in Definition \ref{partialcongruencestructure}. Under the circumstances of this definition, when defining the domain of $f^{M/{\sim}}$ for a function symbol $f : \prod_{i < \fraka} A_i \to A$ where $\fraka < \lambda$ is a cardinal, we need to \emph{choose} a representative from each congruence class of $M_{A_i}/{\sim_{A_i}}$ for each $i < \fraka$. While not fully constructive, this use of the restricted axiom of choice is nevertheless no more objectionable than the (bounded) use of the axiom of choice in the typical proof that if $F$ is a separated presheaf over a small site $(\C, \J)$, then the presheaf $F^+$ is a sheaf (cf. e.g. \cite[Lemma III.5.5]{MM}), which is a result that we will make use of below. However, we should also note that this use of a bounded axiom of choice is not essential, and can be replaced by a use of the axiom of collection, as shown in the proof of \cite[Theorem 4.2]{Berg}. Moreover, the typical construction of sheafification for Lawvere-Tierney topologies in elementary toposes (which subsumes sheafification for Grothendieck toposes) does not require the axiom of choice (cf. e.g. \cite[Theorem V.3.1]{MM}).

Of course, if the signature in question is \emph{finitary} (i.e. if $\lambda = \aleph_0$), then the axiom of choice is not needed in Definition \ref{partialcongruencestructure}. \qed    
} 
\end{rmk}

\noindent We now sketch the details of the Initial Model Theorem from \cite{Horn} that we will need for our purposes. From this point on in the paper, we will assume whatever bounded version of the axiom of choice that we need (depending on the regular cardinal at hand), without always explicitly mentioning this. 

First, given a bounded infinitary quasi-equational theory $\T$ over a signature $\Sigma$, we define a specific partial $\Sigma$-structure $M^{\T}$.
  
\begin{defn}
{\em Let $\Sigma$ be a $\lambda$-ary signature for some regular cardinal $\lambda$. First, let \[ \mathsf{Term}^c(\Sigma) := \{ t \in \mathsf{Term}(\Sigma) : \mathsf{FV}(t) = \emptyset \} \] be the set of \emph{closed terms} of $\mathsf{Term}(\Sigma)$. For any $A \in \Sigma_{\mathsf{Sort}}$, let
\[ \mathsf{Term}^c(\Sigma)_A := \{ t \in \mathsf{Term}^c(\Sigma) : t \ \text{is of sort} \ A \} \] be the set of closed $\Sigma$-terms of sort $A$. 

Now let $\T$ be a $\lambda$-ary quasi-equational theory over $\Sigma$. We define a partial $\Sigma$-structure $M^{\T}$ as follows:
\begin{itemize}
\item For any sort $A \in \Sigma$, we set 
\[ M^{\T}_A := \{ t \in \mathsf{Term}^c(\Sigma)_A : \T \vdash t \downarrow \}. \] 

\item For any function symbol $f : \prod_{i < \fraka} A_i \to A$ of $\Sigma$, we set \[ \mathsf{dom}\left(f^{M^{\T}}\right) := \left\{\vec{t} \in \prod_{i < \fraka} M^{\T}_{A_i} : \T \vdash f\left(\vec{t}\right)\downarrow\right\}, \] and if $\vec{t} \in \mathsf{dom}\left(f^{M^{\T}}\right)$, we set $f^{M^{\T}}\left(\vec{t}\right) := f\left(\vec{t}\right) \in M^\T_A$. \qed
\end{itemize}
}
\end{defn}

\noindent Now we define a partial congruence $\sim^\T$ on $M^{\T}$. For any sort $A \in \Sigma$, we set \[ \sim^\T_A  \: := \left\{ (t_1, t_2) \in M^{\T}_A \times M^{\T}_A : \T \vdash t_1 = t_2\right\}. \]

\noindent Using the rules of partial Horn logic, it is then straightforward to verify that $\sim^\T$ is in fact a partial congruence on $M^{\T}$. We now make the following definition:

\begin{defn}
{\em Let $\T$ be a $\lambda$-ary quasi-equational theory over a $\lambda$-ary signature $\Sigma$ for some regular cardinal $\lambda$, and let $M^{\T}$ be the partial $\Sigma$-structure and $\sim^\T$ the partial congruence on $M^{\T}$ just defined. Applying Definition \ref{partialcongruencestructure} and using the $\lambda$-ary axiom of choice, we then define the following partial $\Sigma$-structure: \[ \mathsf{Free}(\T) := M^{\T}/{\sim^\T}. \] \qed }
\end{defn}

\noindent The following theorem is then proven in \cite[Theorem 22]{Horn} for \emph{finitary} quasi-equational theories; its proof also applies to bounded infinitary quasi-equational theories, assuming the $\lambda$-ary axiom of choice.  

\begin{theo}
\label{initialmodelthm}
Let $\T$ be a bounded infinitary quasi-equational theory over a signature $\Sigma$. Then the partial $\Sigma$-structure $\mathsf{Free}(\T)$ is an \textbf{initial model} of $\T$, i.e. an initial object of the category $\Tmod$. \qed
\end{theo}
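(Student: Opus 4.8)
The plan is to verify the two halves of initiality separately: first that $\mathsf{Free}(\T)$ is itself a model of $\T$, and then that for every model $N$ of $\T$ there is a unique $\Sigma$-morphism $\mathsf{Free}(\T) \to N$. Throughout, the key technical device is a correspondence between closed terms and their interpretations: for any model $N$ of $\T$ and any closed term $t : A$, one shows by induction on the structure of $t$ that $\T \vdash t \downarrow$ implies $t^N \in N_A$ is defined, and that if $\T \vdash t_1 = t_2$ then $t_1^N = t_2^N$. This uses soundness of partial Horn logic with respect to set-based partial $\Sigma$-structures, which is where the list of logical axioms and inference rules from \cite{Horn} (suitably generalized to the $\lambda$-ary setting) gets invoked; I would state soundness as a lemma and only sketch that each axiom/rule is validated, since the infinitary generalization is routine given the definitions of interpretation already set up in the excerpt.

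For the first half, I would check that $M^\T$ satisfies every axiom of $\T$ and that $\sim^\T$ is a partial congruence (the latter is asserted in the excerpt, so I would just recall the reason: the congruence conditions follow from the substitution and congruence rules of partial Horn logic). To see that $\mathsf{Free}(\T) = M^\T/{\sim^\T}$ models an axiom $\varphi \vdash^{\vec x} \psi$ of $\T$, the crucial observation is a \emph{substitution lemma}: for a Horn formula $\chi(\vec x)$ with $\vec x = (x_i)_{i<\fraka}$ of sorts $A_i$, and for closed terms $\vec t \in \prod_{i<\fraka} M^\T_{A_i}$, one has $([t_i])_{i<\fraka} \in \chi^{\mathsf{Free}(\T)}$ if and only if $\T \vdash \chi[\vec t/\vec x]$. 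This is proved by induction on $\chi$, the atomic case using the definition of $\sim^\T$ and the quotient structure, the $\top$ case trivial, and the $\bigwedge_{i\in I}$ case ($|I| < \lambda$) using that provability of a $\lambda$-small conjunction is equivalent to provability of each conjunct. Given this, if $([t_i])_i \in \varphi^{\mathsf{Free}(\T)}$ then $\T \vdash \varphi[\vec t/\vec x]$, so applying the axiom $\varphi \vdash^{\vec x} \psi$ as an inference in partial Horn logic yields $\T \vdash \psi[\vec t/\vec x]$, whence $([t_i])_i \in \psi^{\mathsf{Free}(\T)}$; thus $\mathsf{Free}(\T) \models (\varphi \vdash^{\vec x} \psi)$.

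For the second half, given a model $N$, I would define the candidate morphism $h : \mathsf{Free}(\T) \to N$ on sort $A$ by $h_A([t]) := t^N$ for a closed term $t$ with $\T \vdash t\downarrow$; by the soundness-based correspondence above this is well-defined on representatives (independence of the choice of $t$ in its $\sim^\T_A$-class) and lands in $N_A$. That $h$ is a $\Sigma$-morphism follows because if $([t_i])_i \in \mathsf{dom}(f^{\mathsf{Free}(\T)})$, i.e. $\T \vdash f(\vec t)\downarrow$, then $f(\vec t)^N$ is defined and equals $f^N((t_i^N)_i) = f^N((h_{A_i}([t_i]))_i)$, while $h_A(f^{\mathsf{Free}(\T)}(([t_i])_i)) = h_A([f(\vec t)]) = f(\vec t)^N$, so the two agree. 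Uniqueness is then forced: any $\Sigma$-morphism $g : \mathsf{Free}(\T) \to N$ must send $[x]$ (for a variable $x$, if any closed terms are variables — in general the generators are the closed terms) and must commute with the operations, and since every element of $\mathsf{Free}(\T)_A$ is of the form $[t]$ for a closed term $t$ built from function symbols, a straightforward induction on $t$ shows $g_A([t]) = t^N = h_A([t])$.

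The main obstacle I anticipate is not any single step but the bookkeeping in the infinitary substitution lemma and the soundness lemma: one must be careful that all conjunctions and all contexts $\vec x$ involved stay of size $< \lambda$ so that the formulas and proofs in question actually exist in the $\lambda$-ary logic, and — most delicately — that the use of the $\lambda$-ary axiom of choice in Definition \ref{partialcongruencestructure} is exactly what makes $\mathrm{dom}(f^{\mathsf{Free}(\T)})$ behave correctly under the quotient, so that the substitution lemma's atomic case goes through. Since the excerpt explicitly tells us this is the infinitary analogue of \cite[Theorem 22]{Horn} and that "its proof also applies," I would keep the presentation at the level of indicating these inductions and citing \cite{Horn} for the finitary template, rather than reproducing every case.
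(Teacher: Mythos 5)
Your proposal is correct and follows exactly the standard term-model argument of \cite[Theorem 22]{Horn} that the paper itself cites without reproducing: soundness plus a substitution lemma to show $\mathsf{Free}(\T)$ is a model, the evaluation map $[t] \mapsto t^N$ for existence, and induction on closed terms for uniqueness. The paper gives no further detail beyond asserting that the finitary proof carries over under the $\lambda$-ary axiom of choice, so your write-up is, if anything, more explicit than the source about where the infinitary bookkeeping and the choice principle enter.
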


\begin{rmk}
\label{explicitdescriptionoffreemodel}
{\em
For concreteness, we give the explicit description of $\mathsf{Free}(\T)$ for a $\lambda$-ary quasi-equational theory $\T$ over a signature $\Sigma$. 
\begin{itemize}
\item For any sort $A \in \Sigma$, we have \[ \mathsf{Free}(\T)_A := M^{\T}_A/{\sim^\T_A} = \left\{ [t] : t \in \mathsf{Term}^c(\Sigma)_A \text{ and } \T \vdash t\downarrow \right\}, \]
where $[t]$ is the $\sim^\T_A$-congruence class of $t \in M^\T_A$ (so for any $s, t \in M^\T_A$, we have $[s] = [t]$ iff $\T \vdash s = t$).  

\item If $f : \prod_{i < \fraka} A_i \to A$ is a function symbol of $\Sigma$, then \[ \mathsf{dom}\left(f^{\mathsf{Free}(\T)}\right) = \left\{ \left([t_i]\right)_{i < \fraka} \in \prod_{i < \fraka} \mathsf{Free}(\T)_{A_i} : \T \vdash f\left[(t_i)_{i < \fraka}\right]\downarrow \right\}, \]
and for any $\left([t_i]\right)_{i < \fraka} \in \mathsf{dom}\left(f^{\mathsf{Free}(\T)}\right)$, we have \[ f^{\mathsf{Free}(\T)}\left(\left([t_i]\right)_{i < \fraka}\right) = \left[f\left[(t_i)_{i < \fraka}\right]\right]. \] \qed
\end{itemize}
}
\end{rmk}

It now follows that the main results of \cite[Section 2.2]{thesis}, characterizing the covariant isotropy group functor of $\Tmod$ for a finitary quasi-equational theory $\T$ in logical terms, extend to the bounded infinitary context (assuming a bounded axiom of choice). To restate these main results in the latter context, we first recall the following notions. 

If $M \in \Tmod$ for a $\lambda$-ary quasi-equational theory $\T$ over a signature $\Sigma$, then $\Sigma(M)$ is the \emph{diagram signature} of $M$, which extends $\Sigma$ by adding a new constant symbol $c_a : C$ for any sort $C \in \Sigma_\Sort$ and $a \in M_C$. The $\lambda$-ary quasi-equational theory $\T(M)$ over the signature $\Sigma(M)$ then extends $\T$ by adding axioms expressing that each new constant $c_a$ is defined, and that the function symbols of $\T$ interact with these constants appropriately (for explicit details, see \cite[Definition 2.2.3]{thesis}).  

If $\fraka < \lambda$ and $A_i \in \Sigma_\Sort$ for each $i < \fraka$, then $\Sigma(M, (\x_i)_{i < \fraka})$ is the signature that extends $\Sigma(M)$ by adding new pairwise distinct constant symbols $\x_i : A_i$ for all $i < \fraka$. The $\lambda$-ary quasi-equational theory $\T(M, (\x_i)_{i < \fraka})$ over the signature $\Sigma(M, (\x_i)_{i < \fraka})$ then extends $\T(M)$ by adding axioms expressing that $\x_i$ is defined for each $i < \fraka$.

Finally, if $C \in \Sigma_\Sort$, then $M\la \x_C \ra$ is defined to be the ($\Sigma$-reduct of) the initial model of $\T(M, \x_C)$, which therefore has the following explicit description (cf. Remark \ref{explicitdescriptionoffreemodel}): 

\begin{itemize}
\item For any $B \in \Sigma_\Sort$, \[ M\la \mathsf{x}_C \ra_B = \left\{ [t] : t \in \mathsf{Term}^c(\Sigma(M, \mathsf{x}_C))_B \wedge \T(M, \mathsf{x}_C) \vdash t\downarrow \right\}. \] 

\item If $f : \prod_{i < \fraka} A_i \to A$ is a function symbol of $\Sigma$, then \[ \mathsf{dom}\left(f^{M\la \x_C\ra}\right) = \left\{ \left([t_i]\right)_{i < \fraka} \in \prod_{i < \fraka} M\la \x_C \ra_{A_i} : \T(M, \x_C) \vdash f\left[(t_i)_{i < \fraka}\right]\downarrow \right\}, \]
and for any $\left([t_i]\right)_{i < \fraka} \in \mathsf{dom}\left(f^{M\la \x_C\ra}\right)$ we have \[ f^{M\la \x_C \ra}\left(\left([t_i]\right)_{i < \fraka}\right) = \left[f\left[(t_i)_{i < \fraka}\right]\right]. \]
\end{itemize}
   
\noindent We now have the following bounded infinitary version of \cite[Definition 2.2.47]{thesis}. The notion of syntactic substitution used in the following definition is the standard/expected one; cf. \cite[Remark 2.2.21]{thesis} for an explicit definition.

\begin{defn}
\label{commutesgenericallydefn}
{\em Let $\T$ be a $\lambda$-ary quasi-equational theory over a signature $\Sigma$, and let $M \in \Tmod$ and $([s_C])_C \in \prod_{C \in \Sigma_\Sort} M\la \mathsf{x}_C \ra_C$. 

\begin{itemize}

\item If $f : \prod_{i < \fraka} A_i \to A$ is a function symbol of $\Sigma$, then $([s_C])_C$ \emph{commutes generically with} $f$ if the Horn sequent \[ f\left[(\x_i)_{i < \fraka}\right] \downarrow \: \vdash \ s_A[f\left[(\x_i)_{i < \fraka}\right]/\mathsf{x}_A] = f\left[\left(s_{A_i}[\mathsf{x}_i/\mathsf{x}_{A_i}]\right)_{i < \fraka}\right] \] is provable in $\T\left(M, (\x_i)_{i < \fraka}\right)$. 

\item We say that $([s_C])_C$ is \emph{(substitutionally) invertible} if for every $B \in \Sigma_\Sort$ there is some $\left[s_B^{-1}\right] \in M\la \mathsf{x}_B \ra_B$ with \[ \left[s_B\left[s_B^{-1}/\mathsf{x}_B\right]\right] = [\mathsf{x}_B] = \left[s_B^{-1}\left[s_B/\mathsf{x}_B\right]\right] \in M\la \mathsf{x}_B \ra_B, \] i.e. with
\[ \T(M, \mathsf{x}_B) \vdash s_B\left[s_B^{-1}/\mathsf{x}_B\right] = \mathsf{x}_B = s_B^{-1}[s_B/\mathsf{x}_B]. \]

\item We say that $([s_C])_C$ \emph{reflects definedness} if for every function symbol $f : \prod_{i < \fraka} A_i \to A$ in $\Sigma$, the sequent  
\[ f\left[\left(s_{A_i}[\mathsf{x}_i/\mathsf{x}_{A_i}]\right)_{i < \fraka}\right] \downarrow \ \ \vdash f\left[(\x_i)_{i < \fraka}\right] \downarrow \] is provable in $\T\left(M, (\x_i)_{i < \fraka}\right)$. \qed
\end{itemize}
}
\end{defn}

\noindent As in \cite[Definition 2.2.36]{thesis}, we then have a functor $G_\T : \Tmod \to \Group$, with $G_\T(M)$ for $M \in \Tmod$ being the group of all elements $([s_C])_C \in \prod_{C \in \Sigma} M\la \mathsf{x}_C \ra_C$ that are substitutionally invertible and commute generically with and reflect definedness of every function symbol of $\Sigma$. The unit of this group is the element $([\x_C])_C$, the inverse of any element is obtained via the substitutional invertibility of each of its components (as in Definition \ref{commutesgenericallydefn}), and if $([s_C])_C, ([t_C])_C \in G_\T(M)$, then their product is obtained via substitution as
\[ ([s_C])_C \cdot ([t_C])_C := \left(\left[s_C\left[t_C/\x_C\right]\right]\right)_C. \] For more details, see \cite[Propositions 2.2.35, 2.2.38]{thesis}. From the bounded infinitary versions of \cite[Theorems 2.2.41, 2.2.53]{thesis}, we then conclude that if $\T$ is a bounded infinitary quasi-equational theory, then \[ \Z_{\Tmod} \cong G_\T : \Tmod \to \Group. \] In other words, the covariant isotropy group of $M \in \Tmod$, i.e. its group of extended inner automorphisms, is isomorphic to the group of all elements of $([s_C])_C \in \prod_{C \in \Sigma} M\la \mathsf{x}_C \ra_C$ that are substitutionally invertible and commute generically with and reflect definedness of all operations of $\T$ (naturally in $M$). 

\section{Grothendieck Toposes}
\label{toposes}

In this section, we will use the logical characterization of covariant isotropy for bounded infinitary quasi-equational theories given in Section \ref{logicalbackground} to characterize the covariant isotropy group functor of any Grothendieck topos. To do this, we first associate to any small site $(\C, \J)$, i.e. a small category $\C$ equipped with a Grothendieck topology $J$ (defined in terms of covering sieves), a bounded infinitary signature and quasi-equational theory whose models will be the sheaves for the site (inspired by \cite[3.1]{Bridge}). We assume that the reader has familiarity with the basic notions of the theory of Grothendieck toposes. 

\begin{defn}
\label{sheafsignature}
{\em
Let $(\C, \J)$ be a small site, and let $\lambda$ be the smallest regular cardinal greater than $\card(J)$ for any $C \in \Ob(\C)$ and $J \in \J(C)$. We define a $\lambda$-ary signature $\Sigma^{(\C, \J)}$ as follows:
\begin{itemize}
\item We set $\Sigma^{(\C, \J)}_\Sort := \Ob(\C)$.

\item We define $\Sigma^{(\C, \J)}_\Fun$ to have the following function symbols:
\begin{itemize}
\item For any morphism $f : C \to D$ in $\C$, there is a function symbol $\alpha_f : D \to C$ in $\Sigmasite_\Fun$.

\item For any $C \in \Ob(\C)$ and $J \in \J(C)$, there is a function symbol
\[ \sigma_J : \prod_{f \in J} \dom(f) \to C. \] \qed  
\end{itemize}
\end{itemize}
} 
\end{defn} 

\begin{defn}
\label{sheaftheory}
{\em
Let $(\C, \J)$ be a small site, and $\lambda$ the smallest regular cardinal greater than $\card(J)$ for every $C \in \Ob(\C)$ and $J \in \J(C)$. We define a $\lambda$-ary quasi-equational theory $\Tsite$ over the signature $\Sigmasite$ to have the following axioms:
\begin{itemize}
\item For any morphism $f : C \to D$ in $\C$, the axiom $\top \vdash^{x : D} \alpha_f(x) \downarrow$.

\item For any $C \in \Ob(\C)$, the axiom $\top \vdash^{x : C} \alpha_{\id_C}(x) = x$. 

\item For any composable morphisms $C \xrightarrow{f} D \xrightarrow{g} E$ in $\C$, the axiom \linebreak $\top \vdash^{x : E} \alpha_{g \circ f}(x) = \alpha_f\left(\alpha_g(x)\right)$.

\item For any $C \in \Ob(\C)$ and $J \in \J(C)$, the axiom
\[ \bigwedge_{f \in J} \left(\bigwedge_{\cod(g) = \dom(f)} \alpha_g\left(x_f\right) = x_{f \circ g}\right) \vdash^{\vec{x}} \sigma_J\left[(x_f)_{f \in J}\right] \downarrow \wedge \bigwedge_{f \in J} \alpha_f\left(\sigma_J\left[(x_f)_{f \in J}\right]\right) = x_f, \]
where $\vec{x}$ is the context consisting of pairwise distinct variables $x_f : \dom(f)$ for all $f \in J$.

\item For any $C \in \Ob(\C)$ and $J \in \J(C)$, the axiom
\[ \bigwedge_{f \in J} \left(\bigwedge_{\cod(g) = \dom(f)} \alpha_g\left(x_f\right) = x_{f \circ g}\right) \ \wedge \ \left(\bigwedge_{f \in J} \alpha_f(y) = x_f\right) \vdash^{\vec{x}, y} \sigma_J\left[(x_f)_{f \in J}\right] = y, \]
where $\vec{x}$ is the same context as above, and $y : C$. \qed  
\end{itemize} 
}
\end{defn} 

\noindent It is then straightforward to verify that we have an isomorphism of categories $\Tsite\mathsf{mod} \cong \Sh(\C, \J)$, where $\Sh(\C, \J)$ is the Grothendieck topos of sheaves on $(\C, \J)$. The first three groups of axioms ensure that any model $F$ of $\Tsite$ is a functor $F : \C^\op \to \Set$, the penultimate group ensures that any matching family of elements of $F$ has an amalgamation, and the last group ensures that any such amalgamation is unique, so that $F$ is indeed a sheaf for the site $(\C, \J)$.

We are now going to characterize the covariant isotropy group $\Z_{\Sh(\C, \J)}: \Sh(\C, \J) \to \Group$ by first characterizing the naturally isomorphic $G_{\Tsite} : \Tsite\mathsf{mod} \cong \Sh(\C, \J) \to \Group$ (cf. the end of Section \ref{logicalbackground}). We first restrict our attention to small sites $(\C, \J)$ that are \emph{subcanonical}, meaning that every representable presheaf $\C(-, C) : \C^\op \to \Set$ for $C \in \Ob(\C)$ is a sheaf, and satisfy the further property that no object is covered by the empty sieve (i.e. no $C \in \Ob(\C)$ satisfies $\varnothing \in \J(C)$). We first require the following technical lemmas, whose proofs are mostly deferred to the Appendix for the reader's convenience.

For the following lemma, recall that a presheaf $F : \C^\op \to \Set$ is \emph{separated} (with respect to the topology $\J$) if any matching family in $F$ has at most one amalgamation.  

\begin{lem}
\label{separatedlemma}
Let $(\C, \J)$ be a small site in which no object is covered by the empty sieve, and let $F, G \in \Sh(\C, \J)$. Then the coproduct presheaf $F + G$ is separated. 
\end{lem}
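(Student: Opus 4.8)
The plan is to unwind the definition of "separated" for the coproduct presheaf $F + G$, which at each object $C \in \Ob(\C)$ is the disjoint union $F(C) \sqcup G(C)$, with restriction maps acting componentwise. Suppose we are given a covering sieve $J \in \J(C)$ and a matching family $(x_f)_{f \in J}$ with $x_f \in (F+G)(\dom(f))$, together with two amalgamations $a, b \in (F+G)(C) = F(C) \sqcup G(C)$. I must show $a = b$. The first step is to observe that, since $J \in \J(C)$ and by hypothesis $J \neq \varnothing$ (no object is covered by the empty sieve), we may pick some $f_0 \in J$; then the restriction $a \cdot f_0 = x_{f_0} = b \cdot f_0$ forces $a$ and $b$ to lie in the \emph{same} summand — if $a \in F(C)$ then $a \cdot f_0 \in F(\dom(f_0))$, so $x_{f_0} \in F(\dom(f_0))$, hence $b \cdot f_0 \in F(\dom(f_0))$, and since restriction is componentwise this forces $b \in F(C)$ as well (and symmetrically for $G$).

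Once $a$ and $b$ are known to lie in the same summand, say both in $F(C)$, the second step is to note that for \emph{every} $f \in J$ the restriction $x_f = a \cdot f$ must then lie in $F(\dom(f))$ — not because $J$ is connected in any graph-theoretic sense, but because for each individual $f$ we have $x_f = a\cdot f$ and $a \in F(C)$, so restriction lands in the $F$-component. Thus the entire matching family $(x_f)_{f \in J}$ lives in $F$, i.e. it is a matching family for the sheaf $F$, and both $a$ and $b$ are amalgamations of it \emph{in $F$}. Since $F$ is a sheaf (hence in particular separated), amalgamations in $F$ are unique, so $a = b$. The case where both lie in $G(C)$ is identical, using that $G$ is a sheaf.

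The only genuinely subtle point — and the place where the hypothesis "no object is covered by the empty sieve" is essential — is the very first step: without it, the empty matching family over an empty covering sieve of some object $C$ would be vacuously matching, and \emph{every} element of $F(C) \sqcup G(C)$ would be an amalgamation, so $F+G$ would fail to be separated as soon as $F(C) \sqcup G(C)$ has more than one element. So I expect the "main obstacle" to be purely a matter of bookkeeping: carefully verifying that a single chosen $f_0 \in J$ pins down the summand, and then propagating that to the whole family. Everything else is a direct appeal to the fact that $F$ and $G$ are sheaves and that restriction in $F+G$ is computed componentwise. I would write the argument symmetrically in $F$ and $G$ to avoid repeating it twice.
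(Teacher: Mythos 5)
Your proof is correct and uses essentially the same argument as the paper: the coproduct is a pointwise disjoint union, a nonempty covering sieve pins any amalgamation and the whole matching family into a single summand, and separatedness of that summand (a sheaf) finishes the job. Your organization — starting from two amalgamations and using one $f_0 \in J$ to force them into the same component — is marginally tidier than the paper's explicit three-case analysis of the matching family, but the content is identical.
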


\begin{proof}
Assume the hypotheses, and suppose without loss of generality that $F, G$ are (pointwise) disjoint, so that $(F + G)(C) = F(C) \bigcup G(C)$ for any $C \in \Ob(\C)$. Let $C \in \Ob(\C)$ and $J \in \J(C)$, and let us show that any matching family for $J$ in $F + G$ has at most one amalgamation. So let $(x_f)_{f \in J}$ be such a matching family, so that $x_f \in F(\dom(f)) \bigcup G(\dom(f))$ for any $f \in J$. We then have three possibilities:
\begin{itemize}
\item Suppose that $x_f \in F(\dom(f))$ for every $f \in J$. Then $(x_f)_{f \in J}$ is a matching family for $J$ in $F$, and since $F$ is a sheaf, this matching family has a unique amalgamation $x \in F(C)$, so that $x$ is also an amalgamation for this matching family in $F + G$. If $y \in F(C) \bigcup G(C)$ were another amalgamation, then we could not have $y \in G(C)$, because then, choosing some $f \in J$ (which is possible, since $J \neq \varnothing$), we would have $G(f)(y) = x_f$, which is impossible, since $G(f)(y) \in G(\dom(f))$ and $x_f \in F(\dom(f))$ and these sets are disjoint. So we must have $y \in F(C)$, which entails that $y$ is another amalgamation for $(x_f)_{f \in J}$ in $F$, and hence $x = y$ because $F$ is a sheaf. So in this case $(x_f)_{f \in J}$ has a unique amalgamation in $F + G$, and hence at most one amalgamation. 

\item If $x_f \in G(\dom(f))$ for every $f \in J$, then reasoning as in the previous case shows that this matching family again has a unique amalgamation in $F + G$.

\item In the third case, we have some distinct $g, h \in J$ with $x_g \in F(\dom(g))$ and $x_h \in G(\dom(h))$. The disjointness of $F$ and $G$ then entails that $(x_f)_{f \in J}$ has \emph{no} amalgamation in $F + G$, and hence has at most one.     
\end{itemize} 
\end{proof}

\noindent The following lemma essentially says that if the diagram theory of a sheaf (over a site satisfying our assumptions) proves that  the terms representing two parallel morphisms are equal, then the morphisms themselves must be equal. Recall from the discussion preceding Definition \ref{commutesgenericallydefn} that $\Tsite(F, \x_C)$ is the diagram theory of $F \in \Sh(\C, \J)$ extended by a new (provably defined) constant $\x_C : C$.  

\begin{lem}
\label{arrowequality}
Let $(\C, \J)$ be a small subcanonical site in which no object is covered by the empty sieve, and let $F \in \Sh(\C, \J)$. If $f, g : D \to C$ are parallel morphisms in $\C$ with $\Tsite(F, \x_C) \vdash \alpha_f(\x_C) = \alpha_g(\x_C)$, then $f = g$.   
\end{lem}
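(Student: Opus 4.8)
The plan is to exploit the hypothesis that the site is subcanonical, which means precisely that every representable presheaf $\C(-,C)$ is a sheaf. The key idea is to find a sheaf $F'$ and an element $\xi \in F'(C)$ for which one can \emph{read off} the morphism $f$ from the element $F'(f)(\xi) = \alpha_f(\xi)$, and which is rich enough that the hypothesis $\Tsite(F,\x_C) \vdash \alpha_f(\x_C) = \alpha_g(\x_C)$ can be transported along a suitable $\Tsite$-morphism. The natural candidate is the representable sheaf $\C(-,C)$ together with its universal element $\id_C \in \C(C,C)$, since $\C(-,C)(f)(\id_C) = \id_C \circ f = f$, so distinct $f,g$ give distinct elements of $\C(\dom(f), C)$.

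The steps I would carry out are as follows. First, I would recall the soundness direction of the correspondence between provability in a diagram theory and the semantics: since $F \in \Sh(\C,\J) \cong \Tsite\mathsf{mod}$, and since $\Tsite(F,\x_C)$ is a consistent diagram theory, a natural way to get mileage out of $\Tsite(F,\x_C) \vdash \alpha_f(\x_C) = \alpha_g(\x_C)$ is to interpret $\Tsite(F, \x_C)$ in a model, namely a $\Tsite$-model equipped with an interpretation of the extra constant $\x_C$. Concretely, I want a $\Tsite$-model $F'$ (i.e. a sheaf), a $\Tsite$-morphism $h : F \to F'$ (so that the $\Sigma(F)$-part of $\Tsite(F,\x_C)$ is interpreted via $h$), and an element $\xi \in F'_C$ interpreting $\x_C$, such that $F'$ together with these interpretations is a model of $\Tsite(F,\x_C)$. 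Any model of $\Tsite(F,\x_C)$ satisfies the provable equation, so we would get $\alpha_f^{F'}(\xi) = \alpha_g^{F'}(\xi)$, i.e. $F'(f)(\xi) = F'(g)(\xi)$ in $F'(\dom(f))$. Taking $F' := F + \C(-,C)$ (a sheaf by subcanonicity together with Lemma \ref{separatedlemma} — or rather, one checks $F + \C(-,C)$ is a sheaf, not merely separated; I'd use that coproducts of sheaves in the topos $\Sh(\C,\J)$ are computed so that $F + G$ is again a sheaf, the separatedness lemma being the delicate half), with $h : F \to F + \C(-,C)$ the coproduct inclusion and $\xi := \id_C \in \C(C,C) \subseteq (F + \C(-,C))(C)$, I would verify that this structure models all the diagram axioms of $\Tsite(F,\x_C)$: the $\Tsite$-axioms hold since $F + \C(-,C)$ is a sheaf, the axioms for the constants $c_a$ ($a \in F_B$) hold because $h$ is a morphism of sheaves, and the axioms saying $\x_C\downarrow$ hold trivially. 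Hence $(F+\C(-,C))(f)(\id_C) = (F+\C(-,C))(g)(\id_C)$; since on the $\C(-,C)$ summand this functor acts as the representable presheaf, this reads $\id_C \circ f = \id_C \circ g$ in $\C(\dom(f),C)$, i.e. $f = g$.

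The main obstacle I anticipate is not any single hard computation but rather setting up the bookkeeping of the diagram theory correctly: one must be careful that $\Tsite(F,\x_C)$ is interpreted in $F + \C(-,C)$ with the constants $c_a$ sent to $h(a)$ and $\x_C$ sent to $\id_C$, and that this assignment genuinely satisfies \emph{every} axiom of the diagram theory (in particular the $\sigma_J$-axioms, which require that $F + \C(-,C)$ is a sheaf, not merely separated — this is why we need both subcanonicity and a coproduct-of-sheaves argument rather than just Lemma \ref{separatedlemma}). A secondary subtlety is ensuring the two summands are taken disjoint so that the element $\id_C$ is unambiguously identified, and that the functorial action of $F + \C(-,C)$ restricted to the representable summand really is the Yoneda action $g \mapsto g \circ (-)$; once these points are nailed down, the conclusion $f = g$ is immediate. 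An alternative, perhaps cleaner, route avoids coproducts entirely: take $F' := \C(-,C)$ itself with $\xi := \id_C$, but then one needs a $\Tsite$-morphism $F \to \C(-,C)$ interpreting the constants $c_a$, which need not exist; so the coproduct construction (which always admits the inclusion from $F$) appears to be the right device, and this is exactly why Lemma \ref{separatedlemma} was proved first.
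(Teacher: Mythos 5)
Your overall strategy -- interpret $\Tsite(F,\x_C)$ in a model built from $F$ and the representable $\yo C = \C(-,C)$, send $\x_C$ to (the image of) $\id_C$, and read $f=g$ off from the resulting equality of elements -- is exactly the strategy of the paper's proof. However, there is a genuine error at the crux of your argument: the pointwise coproduct presheaf $F + \yo C$ is \emph{not} a sheaf under the stated hypotheses, only separated, and Lemma \ref{separatedlemma} is not ``the delicate half'' of a sheaf claim -- separatedness is all that is true. The coproduct \emph{in the topos} $\Sh(\C,\J)$ is the sheafification $\as(F+\yo C)$ of the presheaf coproduct; the inclusion $\Sh(\C,\J)\hookrightarrow \Set^{\C^{\op}}$ does not preserve coproducts. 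Concretely, if $J\in\J(C)$ is generated by two arrows $h_1,h_2$ whose domains admit no common refinement over $C$, a matching family for $J$ in $F+\yo C$ may take its values in $F$ on the part of the sieve through $h_1$ and in $\yo C$ on the part through $h_2$; such a family is matching but has no amalgamation in either summand. Since the $\sigma_J$-axioms of $\Tsite$ assert the \emph{existence} of amalgamations, $F+\yo C$ is not a $\Tsite$-model, and your interpretation of $\Tsite(F,\x_C)$ in it is not available.

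The repair is what the paper does: since $F+\yo C$ is separated (Lemma \ref{separatedlemma}), a single application of the plus-construction yields a sheaf $(F+\yo C)^+$, equipped with the canonical map $\eta : F+\yo C \to (F+\yo C)^+$; one interprets the constants $c_a$ via $F \to (F+\yo C)^+$ and $\x_C$ as the class $\left[(h)_{h\in t_C}\right]$ of $\id_C$. Soundness then gives $\left[(fh)_{h\in t_D}\right] = \left[(gh)_{h\in t_D}\right]$, but this is now only an equality of equivalence classes in the plus-construction, so it yields merely a cover $T\in\J(D)$ with $f\circ h = g\circ h$ for all $h\in T$; one needs a final step -- absent from your proposal -- invoking separatedness of the sheaf $\yo C$ to upgrade this local equality to $f=g$. (Equivalently and more abstractly: separatedness of $F+\yo C$ makes the unit into the sheafification monic, so the composite $\yo C \hookrightarrow F+\yo C \to \as(F+\yo C)$ is monic and $f=g$ follows; either way, separatedness is used twice, not replaced by a sheaf claim.) Your instinct that the coproduct with $F$ is ``the right device'' because a map $F \to \yo C$ need not exist is correct, but the passage through sheafification and the descent from local to global equality are essential and missing.
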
 

\begin{proof}
See the Appendix.     
\end{proof} 

\noindent Given $F \in \Sh(\C, \J)$ and $C \in \Ob(\C)$, we say that a closed term $t \in \Term^c\left(\Sigmasite(F, \x_C)\right)$ is \emph{pure} if it contains no object constant $c_a$ from $F$ (recall from the discussion preceding Definition \ref{commutesgenericallydefn} that $\Sigmasite(F, \x_C)$ is the diagram signature of $F$ extended by a new constant $\x_C : C$). We can now prove the following essential result.  

\begin{lem}
\label{purenormalformlemma}
Let $(\C, \J)$ be a small subcanonical site in which no object is covered by the empty sieve, and let $F \in \Sh(\C, \J)$ and $C \in \Ob(\C)$. For any pure closed term $t \in \Term^c\left(\Sigmasite(F, \x_C)\right)$ with $\Tsite(F, \x_C) \vdash t \downarrow$ and $t : D$ for some $D \in \Ob(\C)$, there is some morphism $f : D \to C$ in $\C$ with \[ \Tsite(F, \x_C) \vdash t = \alpha_f(\x_C). \] 
\end{lem}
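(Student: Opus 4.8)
The natural approach is induction on the structure of the pure closed term $t$. A pure closed term over $\Sigmasite(F, \x_C)$ can only be built from the constant $\x_C$ and the function symbols $\alpha_g$ (for morphisms $g$ in $\C$) and $\sigma_J$ (for covering sieves $J$), since purity forbids the object constants $c_a$ and there are no other closed terms available. So I would set up the induction with base case $t \equiv \x_C$ (take $f = \id_C$, using the axiom $\top \vdash^{x:C} \alpha_{\id_C}(x) = x$), and two inductive steps according to whether the outermost symbol is some $\alpha_g$ or some $\sigma_J$.

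For the $\alpha_g$ step: suppose $t \equiv \alpha_g[t']$ where $g : D \to E$ and $t' : E$ is a pure closed term with $\Tsite(F,\x_C) \vdash t' \downarrow$ (which holds since $\Tsite(F,\x_C) \vdash t \downarrow$ forces $t' \downarrow$ by the composition/definedness axioms). By the induction hypothesis there is a morphism $h : E \to C$ with $\Tsite(F, \x_C) \vdash t' = \alpha_h(\x_C)$. Then using congruence and the composition axiom $\top \vdash^{x : C} \alpha_{h \circ g}(x) = \alpha_g(\alpha_h(x))$, we get $\Tsite(F, \x_C) \vdash t = \alpha_g(\alpha_h(\x_C)) = \alpha_{h \circ g}(\x_C)$, so we take $f := h \circ g : D \to C$. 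This case is routine.

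For the $\sigma_J$ step: suppose $t \equiv \sigma_J[(t_f)_{f \in J}]$ where $J \in \J(B)$ for some object $B$ (so $t : B$, i.e. $D = B$), and each $t_f : \dom(f)$ is a pure closed term. From $\Tsite(F, \x_C) \vdash t \downarrow$ and the $\sigma_J$-axioms one extracts that the family $([t_f])_f$ is a ``matching family of terms'', i.e. $\Tsite(F, \x_C) \vdash \alpha_k(t_f) = t_{f \circ k}$ for all composable $k$, and that $\Tsite(F, \x_C) \vdash \alpha_f(\sigma_J[(t_f)_f]) = t_f$ for each $f \in J$. Applying the induction hypothesis to each $t_f$ yields a morphism $h_f : \dom(f) \to C$ with $\Tsite(F, \x_C) \vdash t_f = \alpha_{h_f}(\x_C)$. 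Now I want to produce a single morphism $f : B \to C$ with $\Tsite(F, \x_C) \vdash t = \alpha_f(\x_C)$. The idea is that the $h_f$ should themselves form a compatible family of morphisms with codomain $C$, hence (since $\C(-, C)$ is a sheaf by subcanonicity) glue to a unique morphism $f : B \to C$ with $f \circ j = h_{j}$ on generators of $J$; here is where I expect to need Lemma \ref{arrowequality}, to pass from the provable equality $\Tsite(F,\x_C) \vdash \alpha_k(\alpha_{h_f}(\x_C)) = \alpha_{h_{f \circ k}}(\x_C)$, i.e. $\Tsite(F, \x_C) \vdash \alpha_{h_f \circ k}(\x_C) = \alpha_{h_{f\circ k}}(\x_C)$, to the genuine equality $h_f \circ k = h_{f \circ k}$ of morphisms in $\C$, so that $(h_f)_{f\in J}$ is an honest matching family for $\C(-,C)$ over $J$. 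Subcanonicity then gives the amalgamating $f : B \to C$ with $h_f = f \circ f$ (abusing notation: $h_f = \C(f,C)(f)$), and I then need to check that $\Tsite(F, \x_C)$ proves $\sigma_J[(t_f)_f] = \alpha_f(\x_C)$. For this I use the \emph{uniqueness} axiom for $\sigma_J$ (the last group of axioms in Definition \ref{sheaftheory}): it suffices to show $\Tsite(F,\x_C) \vdash \alpha_f(\alpha_{f}(\x_C)) = t_f$ for each $f \in J$, i.e. $\Tsite(F,\x_C)\vdash \alpha_{f \circ f}(\x_C) = \alpha_{h_f}(\x_C)$, which follows from $f \circ f = h_f$ and congruence, together with the already-noted matching condition on the $t_f$. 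Feeding $y := \alpha_f(\x_C)$ into the uniqueness axiom then gives $\Tsite(F, \x_C) \vdash \sigma_J[(t_f)_f] = \alpha_f(\x_C)$ as desired.

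The main obstacle is the $\sigma_J$ case, and within it the step of recognizing $(h_f)_{f \in J}$ as a genuine matching family for the representable sheaf $\C(-,C)$ — this is exactly what forces the hypotheses that the site is subcanonical and that no object is covered by the empty sieve (the latter is needed so that the induction hypothesis can actually be applied to \emph{some} $t_f$, and more subtly is used in Lemma \ref{arrowequality} on which this case depends). I would also need to be slightly careful that the decomposition $t \equiv \sigma_J[(t_f)_f]$ really does have all $t_f$ pure (immediate, since $t$ is pure) and provably defined (forced by $\Tsite(F,\x_C)\vdash t\downarrow$ via the definedness/composition axioms), so that the induction hypothesis genuinely applies; these are bookkeeping points rather than real difficulties.
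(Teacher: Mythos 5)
Your proof is correct and follows essentially the same route as the paper's: structural induction with the same three cases, using Lemma \ref{arrowequality} to turn the provable equalities $\alpha_{h_f \circ k}(\x_C) = \alpha_{h_{f \circ k}}(\x_C)$ into honest equalities of morphisms, subcanonicity of $\yo C$ to amalgamate the resulting matching family, and the uniqueness axiom for $\sigma_J$ to conclude. (Only a quibble: the no-empty-cover hypothesis enters through Lemma \ref{arrowequality} rather than being needed to apply the induction hypothesis to ``some $t_f$''; an empty $J$ would make the $\sigma_J$ case vacuous but not break the induction.)
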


\begin{proof}
See the Appendix.
\end{proof}

\noindent The following lemma states that no term representing a morphism can be provably equal to an object constant. 

\begin{lem}
\label{constantlemma}
Let $(\C, \J)$ be a small site in which no object is covered by the empty sieve, and let $F \in \Sh(\C, \J)$ and $C \in \Ob(\C)$. For any morphism $f \in \Arr(\C)$ with $\cod(f) = C$, there is no $a \in F(\dom(f))$ with $\Tsite(F, \x_C) \vdash \alpha_f(\x_C) = c_a$. 
\end{lem}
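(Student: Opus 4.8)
The plan is to argue by contradiction using the semantic soundness of partial Horn logic, exploiting the fact that $\Tsite(F, \x_C)$ has a nice model in which the interpretations of the terms $\alpha_f(\x_C)$ and the constants $c_a$ can be kept apart. Suppose toward a contradiction that there is some $f \in \Arr(\C)$ with $\cod(f) = C$ and some $a \in F(\dom(f))$ with $\Tsite(F, \x_C) \vdash \alpha_f(\x_C) = c_a$. I would construct a model $M$ of $\Tsite(F, \x_C)$ in which $\alpha_f^M(\x_C^M) \neq c_a^M$, contradicting soundness (since a model satisfies every provable sequent, in particular $\top \vdash \alpha_f(\x_C) = c_a$, forcing equality of the interpretations).

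The model $M$ I would use is built from a \emph{disjoint} copy of a representable sheaf together with $F$: concretely, take $M := \C(-, C) + F$ (a coproduct in the presheaf category, with the two summands taken pointwise disjoint), which is a sheaf by subcanonicity of the site (each $\C(-,C)$ is a sheaf, $F$ is a sheaf, and $\Sh(\C,\J)$ is closed under colimits computed as sheafifications of presheaf colimits — here the relevant fact is just that $M$ is \emph{separated}, which is Lemma \ref{separatedlemma}, and in fact $M$ is a sheaf because a coproduct of sheaves over a site with no object covered by the empty sieve is again a sheaf). We then interpret the new constant $\x_C$ as the element $\id_C \in \C(C,C) \subseteq M(C)$, which lives in the representable summand, and we interpret each diagram constant $c_b$ (for $b \in F(B)$, $B \in \Ob(\C)$) as $b$ itself, lying in the $F$-summand; with these choices $M$ becomes a model of $\Tsite(F, \x_C)$, since $F \hookrightarrow M$ is a morphism of sheaves realizing the diagram of $F$, and $\x_C^M = \id_C$ is a (provably defined) element of $M(C)$. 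Now $\alpha_f^M(\x_C^M) = M(f)(\id_C) = \id_C \circ f = f \in \C(\dom(f), C) \subseteq M(\dom(f))$ — wait, more carefully $\alpha_f$ is interpreted as the restriction map $M(f): M(C) \to M(\dom(f))$, so $\alpha_f^M(\id_C) = \id_C \cdot f = f$ as an element of the representable summand $\C(\dom(f), C)$. On the other hand $c_a^M = a \in F(\dom(f))$, which lies in the \emph{other} summand. Since the two summands are disjoint, $f \neq a$ in $M(\dom(f))$, giving the desired contradiction.

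The main obstacle I anticipate is verifying cleanly that $M = \C(-,C) + F$ is genuinely a sheaf (not merely separated) and that it carries a bona fide model structure for $\Tsite(F,\x_C)$ — i.e. that the amalgamation and uniqueness axioms for the $\sigma_J$ operations hold. This requires the observation that, since no object is covered by the empty sieve, any matching family for a sieve $J$ in $\C(-,C) + F$ must lie entirely in one summand (by the disjointness argument already used in the proof of Lemma \ref{separatedlemma}), and hence has a unique amalgamation in that summand; this both shows $M$ is a sheaf and pins down $\sigma_J^M$. The remaining verifications — that the functoriality axioms for the $\alpha_g$ hold (immediate, as $M$ is an honest presheaf), that $\x_C^M$ is defined, and that the $F$-diagram axioms hold via the coproduct inclusion $F \hookrightarrow M$ — are routine. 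One could alternatively phrase the whole argument more slickly by noting that the coproduct inclusion $F \to \C(-,C)+F$ together with a choice of global point picks out the universal such situation, but the explicit model is the most transparent route.
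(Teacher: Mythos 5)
Your overall strategy --- produce a model of $\Tsite(F, \x_C)$ in which the interpretations of $\alpha_f(\x_C)$ and $c_a$ land in disjoint pieces, then invoke soundness --- is exactly the paper's, but two steps in your construction fail as written. First, the lemma does \emph{not} assume the site is subcanonical (only that no object is covered by the empty sieve), so you may not assume that $\C(-,C)$ is a sheaf; the paper sidesteps this by using the terminal sheaf $\mathds{1}$ in place of the representable, and tests the provable equation against an element of the $\mathds{1}$-summand rather than against $\id_C$. Second, and more seriously, the coproduct presheaf of two sheaves is in general only \emph{separated}, not a sheaf, even when no object is covered by the empty sieve: your claim that every matching family in $\C(-,C)+F$ lies entirely in one summand is false. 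A matching family $(x_f)_{f\in J}$ only determines a partition of $J$ into two subsieves according to which summand each $x_f$ lies in, and nothing forces one of these subsieves to be all of $J$; this is precisely the third case in the proof of Lemma \ref{separatedlemma}, where such mixed families are shown to have \emph{no} amalgamation --- which yields separatedness but defeats the sheaf condition. Consequently your $M$ need not be a model of $\Tsite(F,\x_C)$ at all, and the contradiction with soundness does not follow.

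The repair is the one the paper carries out: apply the plus-construction to the separated presheaf $F + \mathds{1}$ to obtain a genuine sheaf $(F+\mathds{1})^+$, map $F$ into it by the canonical morphism $\eta$, and run the disjointness argument at the level of equivalence classes of matching families. The key point there is that identifying a class built from $F$-elements with one built from $\mathds{1}$-elements would require a common refining cover on which the two families agree, and such a cover is nonempty by hypothesis, contradicting pointwise disjointness. If you insist on your variant with the representable, you would likewise have to pass to the sheafification of $\yo C + F$ and argue with the (disjoint) coproduct in $\Sh(\C,\J)$, at which point the appeal to subcanonicity becomes unnecessary anyway.
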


\begin{proof}
See the Appendix.
\end{proof}

\noindent The following lemma essentially provides a kind of \emph{normal form} for provably defined closed terms over a sheaf. 

\begin{lem}
\label{normalformlemma}
Let $(\C, \J)$ be a small site, and let $F \in \Sh(\C, \J)$ and $C \in \Ob(\C)$. For any closed term $t \in \Term^c\left(\Sigmasite(F, \x_C)\right)$ with $\Tsite(F, \x_C) \vdash t \downarrow$ and $t : D$ for some $D \in \Ob(\C)$, there is some cover $J \in \J(D)$ with $\Tsite(F, \x_C) \vdash t = \sigma_J\left(\left(t_h\right)_{h \in J}\right)$, where for any $h \in J$ either $\Tsite(F, \x_C) \vdash t_h = c_a$ for some $a \in F(\dom(h))$ or $\Tsite(F, \x_C) \vdash t_h = \alpha_f(\x_C)$ for some morphism $f : \dom(h) \to C$ in $\C$. 
\end{lem}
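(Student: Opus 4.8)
I would argue by induction on the structure of the closed term $t$ (this is well-founded even though terms may be infinitely branching), proving the slightly stronger statement that the family $(t_h)_{h \in J}$ can moreover be chosen to be \emph{provably matching}, i.e.\ with $\Tsite(F, \x_C) \vdash \bigwedge_{h \in J} \bigwedge_{\cod(g) = \dom(h)} \alpha_g(t_h) = t_{h \circ g}$. This strengthening seems unavoidable: the only mechanism $\Tsite$ provides for proving an equation of the form $u = \sigma_J[\vec u]$ is the uniqueness axioms of Definition \ref{sheaftheory}, whose hypothesis is exactly this matching condition. I would also record one preliminary observation about $\Tsite(F, \x_C)$: for closed terms $\vec t = (t_g)_{g \in J}$, one has $\Tsite(F, \x_C) \vdash \sigma_J[\vec t]\downarrow$ \emph{if and only if} $\vec t$ is provably matching. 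The ``if'' direction is immediate from the amalgamation axioms; the ``only if'' direction, which is the real technical content, follows by induction on partial Horn derivations, using that the only axioms of $\Tsite(F,\x_C)$ which place an atom $\sigma_J[\vec x]\downarrow$ to the right of $\vdash$ carry the matching condition as their hypothesis.

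\textbf{Base case and the $\alpha_f$-case.} Suppose first that $\Tsite(F, \x_C) \vdash t = c_a$ for some $a \in F(D)$, or $\Tsite(F, \x_C) \vdash t = \alpha_e(\x_C)$ for some $e : D \to C$ in $\C$ (this covers $t \equiv c_a$, and -- since $\Tsite(F,\x_C)\vdash \x_C = \alpha_{\id_C}(\x_C)$ -- also $t \equiv \x_C$, which are the base of the term recursion). Take $J$ to be the maximal sieve on $D$ and set $t_h := \alpha_h(t)$ for $h \in J$. Then $(t_h)_h$ is provably matching by functoriality, $\Tsite(F, \x_C) \vdash t = \sigma_J[(t_h)_h]$ by the uniqueness axiom applied with amalgamation $t$ itself, and each $t_h$ has the required form since $\alpha_h(c_a)$ is provably equal to $c_{F(h)(a)}$ (a diagram axiom) and $\alpha_h(\alpha_e(\x_C))$ to $\alpha_{e \circ h}(\x_C)$ (functoriality). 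Next, if $t \equiv \alpha_f[t']$ for some $f : D \to D'$ in $\C$, then definedness of $t$ forces definedness of $t'$, so the inductive hypothesis yields a cover $J' \in \J(D')$ and a provably matching family $(t'_g)_{g \in J'}$ of terms of the required form with $\Tsite(F, \x_C) \vdash t' = \sigma_{J'}[(t'_g)_g]$. I would then put $J := f^*(J') = \{ h : \cod(h) = D,\ f \circ h \in J' \}$, which is a cover of $D$ by stability of the topology, and $t_h := t'_{f \circ h}$ for $h \in J$ (of sort $\dom(h) = \dom(f \circ h)$); the matching condition and the required form are inherited from $(t'_g)_g$, and $t = \sigma_J[(t_h)_h]$ follows from the uniqueness axiom once one notes that $\alpha_h(\alpha_f(t')) = \alpha_{f \circ h}(\sigma_{J'}[(t'_g)_g]) = t'_{f \circ h}$ by functoriality and the amalgamation axiom for $J'$.

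\textbf{The $\sigma_{J_0}$-case.} Suppose $t \equiv \sigma_{J_0}[(s_g)_{g \in J_0}]$ with $J_0 \in \J(D)$. Definedness of $t$ forces definedness of each $s_g$ and, by the preliminary observation, that $(s_g)_{g \in J_0}$ is provably matching, whence $\Tsite(F, \x_C) \vdash \alpha_g(t) = s_g$ for each $g \in J_0$ by the amalgamation axiom. Applying the inductive hypothesis to each $s_g$ gives a cover $J_g \in \J(\dom(g))$ and a provably matching family $(s_g^{(e)})_{e \in J_g}$ of terms of the required form with $\Tsite(F, \x_C) \vdash s_g = \sigma_{J_g}[(s_g^{(e)})_e]$. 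I would then take $K := \{ g \circ e : g \in J_0,\ e \in J_g \}$, which is a sieve on $D$ (each $J_g$ being a sieve) and is a cover by the local-character axiom of a Grothendieck topology, since $g^*(K) \supseteq J_g \in \J(\dom(g))$ for every $g \in J_0$; and I would set $t_m := \alpha_m(t)$ for $m \in K$. This family is provably matching by functoriality and satisfies $t = \sigma_K[(t_m)_m]$ by the uniqueness axiom; and fixing any factorization $m = g \circ e$ with $g \in J_0$, $e \in J_g$, one computes $t_m = \alpha_e(\alpha_g(t)) = \alpha_e(s_g) = \alpha_e(\sigma_{J_g}[(s_g^{(e')})_{e'}]) = s_g^{(e)}$, using $\alpha_g(t) = s_g$ and the amalgamation axiom for $J_g$, so $t_m$ has the required form. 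This closes the induction.

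\textbf{Main obstacle.} The delicate point is precisely the ``only if'' half of the preliminary observation invoked in the $\sigma_{J_0}$-case: that $\Tsite(F, \x_C) \vdash \sigma_{J_0}[(s_g)_g]\downarrow$ genuinely forces $(s_g)_g$ to be provably matching. Intuitively this is clear -- there is no other route to proving a $\sigma$-term defined -- but making it rigorous requires a careful induction over the rules of partial Horn logic (tracking how and where the atom $\sigma_{J_0}[\vec x]\downarrow$ can be introduced into a derivation, through cuts and substitutions), or equivalently an appeal to the explicit construction of the initial model $F\la\x_C\ra$ from Theorem \ref{initialmodelthm} and Remark \ref{explicitdescriptionoffreemodel}. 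Everything else is routine bookkeeping with the axioms of a Grothendieck topology (maximal sieves and pullback sieves are covers, covers compose under local character) and with the amalgamation and uniqueness axioms of $\Tsite$.
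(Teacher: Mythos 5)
Your proposal is correct and follows essentially the same route as the paper's proof: structural induction on $t$, with the maximal sieve in the base cases, the pullback sieve $f^*J'$ in the $\alpha_f$-case, and the composite sieve $K = \{g \circ e : g \in J_0,\ e \in J_g\}$ (a cover by transitivity) in the $\sigma$-case. Your only real deviations are bookkeeping improvements: setting $t_m := \alpha_m(t)$ for $m \in K$ sidesteps the well-definedness check the paper carries out for its choice $s_{h \circ k} := t_{h,k}$ (via the separatedness axiom on intersection covers), and the ``only if'' half of your preliminary observation --- that provable definedness of a $\sigma$-term forces provable matching --- is precisely the step the paper uses silently when it asserts that $\vdash t\downarrow$ yields $\vdash \alpha_g(t_h) = t_{h \circ g}$, so flagging it as the delicate point is if anything more careful than the original.
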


\begin{proof}
See the Appendix.
\end{proof}

\noindent This final technical lemma essentially states that any provably defined closed term with a substitutional right inverse is provably equal to a pure term (i.e. a term containing no object constant).

\begin{lem}
\label{invertiblelemma}
Let $(\C, \J)$ be a small subcanonical site in which no object is covered by the empty sieve, and let $F \in \Sh(\C, \J)$ and $C \in \Ob(\C)$. For any closed term $t \in \Term^c\left(\Sigmasite(F, \x_C)\right)$ with $\Tsite(F, \x_C) \vdash t \downarrow$ and $t : C$, if there is some term $s \in \Term^c\left(\Sigmasite(F, \x_C)\right)$ with $\Tsite(F, \x_C) \vdash s \downarrow$ and $s : C$ and $\Tsite(F, \x_C) \vdash t[s/\x_C] = \x_C$, then there is a \textbf{pure} term $t' \in \Term^c\left(\Sigmasite(F, \x_C)\right)$ with $\Tsite(F, \x_C) \vdash t = t'$.  
\end{lem}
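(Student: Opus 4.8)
The plan is to combine the normal-form description of provably defined closed terms (Lemma \ref{normalformlemma}) with Lemma \ref{constantlemma}, exploiting the substitutional right inverse $s$ to exclude leaves of the object-constant kind from the normal form of $t$.

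Since $\Tsite(F,\x_C) \vdash t\downarrow$ and $t : C$, Lemma \ref{normalformlemma} supplies a cover $J \in \J(C)$ and closed terms $t_h : \dom(h)$ for $h \in J$ with $\Tsite(F,\x_C) \vdash t = \sigma_J\big((t_h)_{h \in J}\big)$, where for each $h \in J$ either $\Tsite(F,\x_C)\vdash t_h = c_{a_h}$ for some $a_h \in F(\dom(h))$, or $\Tsite(F,\x_C)\vdash t_h = \alpha_{f_h}(\x_C)$ for some morphism $f_h : \dom(h) \to C$. Because $\sigma_J\big((t_h)_{h\in J}\big)$ is provably equal to the provably defined term $t$, the family $(t_h)_{h \in J}$ provably satisfies the matching conditions appearing in the hypothesis of the penultimate axiom of Definition \ref{sheaftheory} (this is established while producing the normal form in the Appendix proof of Lemma \ref{normalformlemma}); hence that axiom yields $\Tsite(F,\x_C) \vdash \alpha_h\big(\sigma_J((t_h)_{h\in J})\big) = t_h$, and so $\Tsite(F,\x_C) \vdash \alpha_h(t) = t_h$ for every $h \in J$.

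Next I would show no leaf $t_h$ falls into the first case above. Suppose toward a contradiction that $\Tsite(F,\x_C)\vdash t_{h_0} = c_a$ for some $h_0 \in J$ and $a \in F(\dom(h_0))$; then $\Tsite(F,\x_C)\vdash \alpha_{h_0}(t) = c_a$. Substitute the closed term $s$ for the constant $\x_C$ throughout a derivation of this sequent; this is legitimate because $\Tsite(F,\x_C)\vdash s\downarrow$ and the only axiom of $\Tsite(F,\x_C)$ in which $\x_C$ occurs is $\x_C \downarrow$. Since $c_a[s/\x_C] = c_a$, we obtain $\Tsite(F,\x_C)\vdash \alpha_{h_0}(t[s/\x_C]) = c_a$. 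On the other hand, $\Tsite(F,\x_C)\vdash t[s/\x_C] = \x_C$ by hypothesis, and $\alpha_{h_0}$ is a total function symbol, so $\Tsite(F,\x_C)\vdash \alpha_{h_0}(t[s/\x_C]) = \alpha_{h_0}(\x_C)$. Combining these gives $\Tsite(F,\x_C)\vdash \alpha_{h_0}(\x_C) = c_a$, with $\cod(h_0) = C$ and $a \in F(\dom(h_0))$, contradicting Lemma \ref{constantlemma}.

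Therefore, for each $h \in J$ there is a morphism $f_h : \dom(h) \to C$ with $\Tsite(F,\x_C)\vdash t_h = \alpha_{f_h}(\x_C)$, whence $\Tsite(F,\x_C)\vdash t = \sigma_J\big((\alpha_{f_h}(\x_C))_{h\in J}\big) =: t'$; the term $t'$ contains no object constant, so it is pure, completing the proof. The one step requiring care is the extraction of the componentwise identities $\alpha_h(t) = t_h$ from the normal form $t = \sigma_J((t_h)_{h\in J})$: this rests on the observation that a provable $\sigma_J$-definedness assertion in partial Horn logic over $\Tsite$ can only be obtained via the matching-family axiom of Definition \ref{sheaftheory}, which is precisely what the Appendix proof of Lemma \ref{normalformlemma} verifies. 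Everything else is a routine combination of Lemmas \ref{normalformlemma} and \ref{constantlemma} with the invariance of provability under substituting provably defined terms for constants.
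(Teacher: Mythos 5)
Your proposal is correct and follows essentially the same route as the paper's own proof: apply Lemma \ref{normalformlemma} to put $t$ in the form $\sigma_J\left(\left(t_h\right)_{h \in J}\right)$, use the right inverse $s$ together with the substitution-invariance of provability to derive $\Tsite(F, \x_C) \vdash \alpha_h(\x_C) = t_h[s/\x_C]$, rule out the object-constant case via Lemma \ref{constantlemma}, and conclude that $t$ is provably equal to the pure term $\sigma_J\left(\left(\alpha_{f_h}(\x_C)\right)_{h \in J}\right)$. The only (immaterial) difference is the order of operations: the paper substitutes $s$ into the normal form first and then applies $\alpha_h$ and the amalgamation axiom, whereas you first extract $\alpha_h(t) = t_h$ and then substitute.
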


\begin{proof}
See the Appendix.
\end{proof}

\noindent We can now characterize in \emph{logical} terms the covariant isotropy group of any sheaf over a small site that satisfies the assumptions of Lemma \ref{invertiblelemma}. For such a sheaf $F$, recall that the logical variant $G_{\Tsite}(F)$ of the covariant isotropy group $\Z_{\Sh(\C, \J)}(F)$ of $F$ consists of all $\Ob(\C)$-indexed families $([s_C])_C \in \prod_{C \in \Ob(\C)} F\la \x_C \ra_C$ that are substitutionally invertible and commute generically with and reflect definedness of all function symbols of $\Tsite$; cf. Definition \ref{commutesgenericallydefn} and the discussion thereafter. 

If $\C$ is a small category, then $\Aut\left(1_\C\right)$ is the group of natural automorphisms of the identity functor $1_\C : \C \to \C$, sometimes referred to as the \emph{centre} of $\C$.  

\begin{prop}
\label{isotropyprop}
Let $(\C, \J)$ be a small subcanonical site in which no object is covered by the empty sieve. For any $F \in \Sh(\C, \J)$, we have
\[ G_{\Tsite}(F) = \left\{ \left(\left[\alpha_{\psi_C}(
\x_C)\right]\right)_{C} \in \prod_{C \in \Ob(\C)} F\la \x_C \ra_C \colon \psi \in \Aut\left(1_\C\right) \right\}. \]
\end{prop}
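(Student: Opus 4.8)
The plan is to prove the two inclusions separately, and the main work is in the ``$\subseteq$'' direction. For the ``$\supseteq$'' direction, given $\psi \in \Aut(1_\C)$, I would check directly that $\left(\left[\alpha_{\psi_C}(\x_C)\right]\right)_C$ lies in $G_{\Tsite}(F)$: substitutional invertibility follows because $\psi^{-1} \in \Aut(1_\C)$ and the naturality square for $\psi$ (together with the functoriality axioms of $\Tsite$) gives $\Tsite(F, \x_C) \vdash \alpha_{\psi_C}(\alpha_{\psi_C^{-1}}(\x_C)) = \alpha_{\psi_C^{-1} \circ \psi_C}(\x_C) = \alpha_{\id_C}(\x_C) = \x_C$ and symmetrically; commuting generically with (and reflecting definedness of) each $\alpha_f : D \to C$ amounts to the identity $\psi_C \circ f = f \circ \psi_D$, which is exactly the naturality of $\psi$ at $f$; and commuting generically with (and reflecting definedness of) each $\sigma_J$ follows because each component of a matching family is acted on by a morphism, so naturality of $\psi$ propagates through the amalgamation, using the uniqueness axiom for $\sigma_J$ (here subcanonicity is not yet needed, only the axioms of $\Tsite$).

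For the ``$\subseteq$'' direction, let $([s_C])_C \in G_{\Tsite}(F)$. First I would invoke Lemma \ref{invertiblelemma}: since $[s_C]$ is substitutionally invertible, each $s_C$ is provably equal to a \textbf{pure} closed term over $\Sigmasite(F,\x_C)$. Then by Lemma \ref{purenormalformlemma}, this pure term is provably equal to $\alpha_{\psi_C}(\x_C)$ for some morphism $\psi_C : C \to C$ in $\C$. So automatically $[s_C] = [\alpha_{\psi_C}(\x_C)]$, and it remains to show that the resulting family $(\psi_C)_{C \in \Ob(\C)}$ is a natural automorphism of $1_\C$. Naturality is where ``commutes generically'' gets used: for a morphism $f : D \to C$, commuting generically with $\alpha_f$ gives $\Tsite(F,\x_D) \vdash \alpha_{\psi_D}(\alpha_f(\x_C)) \stackrel{?}{=} \ldots$; carefully tracking the substitution $s_C[\alpha_f(\x_C)/\x_C]$ versus $\alpha_f(s_D)$ and using the composition axioms for $\alpha$, I would extract $\Tsite(F, \x_C) \vdash \alpha_{f \circ \psi_C}(\x_C) = \alpha_{\psi_D \circ f}(\x_C)$ — wait, one must be careful about variances here — and then Lemma \ref{arrowequality} forces $f \circ \psi_C = \psi_D \circ f$ (in the appropriate order), i.e. naturality. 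That $\psi_C$ is invertible with inverse $\psi_C^{-1}$ coming from the substitutional inverse of $[s_C]$ follows by applying Lemmas \ref{invertiblelemma}, \ref{purenormalformlemma}, \ref{arrowequality} to $s_C^{-1}$ as well, so that $\psi \in \Aut(1_\C)$.

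The main obstacle is the bookkeeping in the naturality step: one has to unwind the definition of ``commutes generically with $\alpha_f$'' — namely that $\Tsite(F,(\x_i)) \vdash \alpha_f[\vec{\x}]\downarrow \vdash s_C[\alpha_f(\x_C)/\x_C] = \alpha_f(s_D[\x_D/\x_C])$ in the two-variable context — substitute the normal forms $s_C \equiv \alpha_{\psi_C}(\x_C)$, $s_D \equiv \alpha_{\psi_D}(\x_D)$, apply the $\alpha$-composition axioms to collapse both sides to a single $\alpha_{(-)}(\x_C)$ term, and only then apply Lemma \ref{arrowequality} to descend from provable equality of terms to equality of morphisms. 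The variance of the $\alpha_f : D \to C$ (note $f : C \to D$ in $\C$ but $\alpha_f : D \to C$ as a function symbol) must be threaded through consistently; once that is set up, the remaining verifications (invertibility, well-definedness of the assignment $([s_C])_C \mapsto \psi$) are routine given Lemmas \ref{arrowequality}, \ref{purenormalformlemma}, and \ref{invertiblelemma}.
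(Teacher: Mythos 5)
Your proposal is correct and follows essentially the same route as the paper: Lemma \ref{invertiblelemma} to reduce to pure terms, Lemma \ref{purenormalformlemma} to obtain $s_C = \alpha_{\psi_C}(\x_C)$, and then generic commutation with the $\alpha_f$ together with Lemma \ref{arrowequality} to extract naturality and invertibility of $\psi$; the variance works out to $f \circ \psi_C = \psi_D \circ f$ exactly as you anticipated. The paper likewise treats the converse inclusion as a routine verification, noting only that for the $\sigma_J$ operations it suffices that a natural isomorphism of sheaves automatically preserves and reflects amalgamations of matching families.
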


\begin{proof}
Assume the hypotheses. If $\psi$ is a natural automorphism of $1_\C : \C \to \C$, then for any $C \in \Ob(\C)$ we have $\alpha_{\psi_C}(\x_C) \in \Term^c\left(\Sigmasite(F, \x_C)\right)$ with $\alpha_{\psi_C}(\x_C) : C$ and $\Tsite(F, \x_C) \vdash \alpha_{\psi_C}(\x_C) \downarrow$, so that $\left[\alpha_{\psi_C}(\x_C)\right] \in F\la \x_C \ra_C$ and hence we do indeed have $\left(\left[\alpha_{\psi_C}(
\x_C)\right]\right)_{C} \in \prod_{C \in \Ob(\C)} F\la \x_C \ra_C$. It is then straightforward to show that 
$\left(\left[\alpha_{\psi_C}(
\x_C)\right]\right)_{C}$ is substitutionally invertible and commutes generically with and reflects definedness of the operations of $\Tsite$, and therefore belongs to $G_{\Tsite}(F)$ (in fact, it suffices to just show that it is substitutionally invertible and commutes generically with the operations $\alpha_f$ for $f \in \Arr(\C)$, because a natural isomorphism between sheaves automatically preserves and reflects (amalgamations of) matching families). Categorically speaking, the right-to-left inclusion says that if $\psi : 1_\C \xrightarrow{\sim} 1_\C$ is a natural automorphism, then one can define an element $\pi \in \Z_{\Sh(\C, \J)}(F)$ by setting $\pi_\gamma : G \xrightarrow{\sim} G$ as $\left(\pi_\gamma\right)_C := G\left(\psi_C\right) : GC \xrightarrow{\sim} GC$ for any $C \in \Ob(\C)$ and natural transformation $\gamma : F \to G$ in $\Sh(\C, \J)$, and it is easy to check that this does indeed define an element of covariant isotropy for $F$. 

For the converse inclusion, let $\left(\left[s_C\right]\right)_C \in G_{\Tsite}(F)$. So for any $C \in \Ob(\C)$ we have $s_C \in \Term^c\left(\Sigmasite(F, \x_C)\right)$ with $s_C : C$ and $\Tsite(F, \x_C) \vdash s_C \downarrow$. Since each $s_C$ is substitutionally invertible, it follows by Lemma \ref{invertiblelemma} that each $s_C$ is (provably equal to) a \emph{pure} term. By Lemma \ref{purenormalformlemma}, it then follows that for every $C \in \Ob(\C)$, there is a morphism $\psi_C : C \to C$ in $\C$ with
\[ \Tsite(F, \x_C) \vdash s_C = \alpha_{\psi_C}(\x_C) \] (since $s_C : C$). We now claim that $\psi := \left(\psi_C\right)_{C \in \Ob(\C)}$ is a natural automorphism of $1_\C$, which will complete the proof. 

First, we show that each $\psi_C$ is an isomorphism. It follows as for $s_C$ that if $\left[s_C^{-1}\right]$ is the substitutional inverse of $[s_C]$, then there is some morphism $\psi_C^{-1} : C \to C$ in $\C$ with $\Tsite(F, \x_C) \vdash s_C^{-1} = \alpha_{\psi_C^{-1}}(\x_C)$, so that
\[ \Tsite(F, \x_C) \vdash \alpha_{\psi_C}\left(\alpha_{\psi_C^{-1}}(\x_C)\right) = \x_C = \alpha_{\psi_C^{-1}}\left(\alpha_{\psi_C}(\x_C)\right) \] and hence
\[ \Tsite(F, \x_C) \vdash \alpha_{\psi_C^{-1} \circ \psi_C}(\x_C) = \x_C = \alpha_{\id_C}(\x_C), \] so that $\psi_C^{-1} \circ \psi_C = \id_C$ by Lemma \ref{arrowequality}, and similarly $\psi_C \circ \psi_C^{-1} = \id_C$. This proves that $\psi_C$ is an isomorphism for every $C \in \Ob(\C)$.

To show that $\psi$ is natural, let $f : C \to D$ be a morphism in $\C$, and let us show that $f \circ \psi_C = \psi_D \circ f$. Since $\left(\left[s_C\right]\right)_C \in G_{\Tsite}(F)$, we know that $\left(\left[s_C\right]\right)_C$ commutes generically with the function symbol $\alpha_f : D \to C$ of $\Sigmasite$, which means that
\[ \Tsite(F, \x_D) \vdash s_C\left[\alpha_f(\x_D)/\x_C\right] = \alpha_f(s_D), \] i.e. that
\[ \Tsite(F, \x_D) \vdash \alpha_{\psi_C}(\x_C)\left[\alpha_f(\x_D)/\x_C\right] = \alpha_f\left(\alpha_{\psi_D}(\x_D)\right), \] so that
\[ \Tsite(F, \x_D) \vdash \alpha_{f \circ \psi_C}(\x_D) = \alpha_{\psi_D \circ f}(\x_D), \] from which we conclude $f \circ \psi_C = \psi_D \circ f$ by Lemma \ref{arrowequality}. This concludes the proof that $\psi$ is a natural automorphism of $1_\C$ with $([s_C])_C = \left(\left[\alpha_{\psi_C}(
\x_C)\right]\right)_{C}$.   
\end{proof}

\noindent We can now deduce a \emph{categorical} characterization of the covariant isotropy group of any sheaf over a small site satisfying the assumptions of Proposition \ref{isotropyprop}:

\begin{cor}
\label{isotropycor}
Let $(\C, \J)$ be a small subcanonical site in which no object is covered by the empty sieve. For any $F \in \Sh(\C, \J)$, we have
\[ \Z_{\Sh(\C, \J)}(F) \cong \Aut\left(1_\C\right). \]
\end{cor}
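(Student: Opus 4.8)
The plan is to chain together two identifications already available. By the concluding result of Section~\ref{logicalbackground}, there is a natural isomorphism $\Z_{\Tmod} \cong G_\T$ for any bounded infinitary quasi-equational theory $\T$; applying this with $\T = \Tsite$ and using the isomorphism of categories $\Tsite\mathsf{mod} \cong \Sh(\C, \J)$, we obtain $\Z_{\Sh(\C, \J)}(F) \cong G_{\Tsite}(F)$ for every $F \in \Sh(\C, \J)$. It therefore suffices to produce a group isomorphism $G_{\Tsite}(F) \cong \Aut(1_\C)$. Proposition~\ref{isotropyprop} already furnishes a \emph{bijection} via $\psi \mapsto \left(\left[\alpha_{\psi_C}(\x_C)\right]\right)_C$, so the remaining work is to upgrade this bijection to a group isomorphism.

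First I would record that the assignment $\Phi \colon \Aut(1_\C) \to G_{\Tsite}(F)$, $\psi \mapsto \left(\left[\alpha_{\psi_C}(\x_C)\right]\right)_C$, is well-defined and surjective by Proposition~\ref{isotropyprop}. Injectivity is immediate from Lemma~\ref{arrowequality}: if $\Phi(\psi) = \Phi(\psi')$, then $\Tsite(F, \x_C) \vdash \alpha_{\psi_C}(\x_C) = \alpha_{\psi'_C}(\x_C)$ for each $C \in \Ob(\C)$, whence $\psi_C = \psi'_C$, so $\psi = \psi'$. Thus $\Phi$ is a bijection.

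Next I would verify multiplicativity. The product on $G_{\Tsite}(F)$ is given by substitution, $([s_C])_C \cdot ([t_C])_C = ([s_C[t_C/\x_C]])_C$, while the product on $\Aut(1_\C)$ is vertical composition of natural transformations. Using the functoriality axiom $\alpha_{g \circ f}(x) = \alpha_f(\alpha_g(x))$ of $\Tsite$, one has for $\psi, \psi' \in \Aut(1_\C)$ and each $C$ that $\alpha_{\psi_C}(\x_C)\left[\alpha_{\psi'_C}(\x_C)/\x_C\right] \equiv \alpha_{\psi_C}\left(\alpha_{\psi'_C}(\x_C)\right)$, which $\Tsite(F,\x_C)$ proves equal to $\alpha_{\psi'_C \circ \psi_C}(\x_C)$; hence $\Phi(\psi) \cdot \Phi(\psi') = \Phi(\psi' \circ \psi)$. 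Since $\Aut(1_\C)$ is abelian — naturality of $\psi$ at the morphism $\psi'_C \colon C \to C$ forces $\psi_C \circ \psi'_C = \psi'_C \circ \psi_C$ — this shows $\Phi$ is a group homomorphism, hence an isomorphism. Composing with $\Z_{\Sh(\C, \J)}(F) \cong G_{\Tsite}(F)$ then yields $\Z_{\Sh(\C, \J)}(F) \cong \Aut(1_\C)$.

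I expect no genuine obstacle here: given Proposition~\ref{isotropyprop}, the corollary is essentially bookkeeping. The one point requiring care is matching up the group operations, since the substitution product composes in the order opposite to the naive expectation; invoking the abelianness of $\Aut(1_\C)$ disposes of this cleanly. It may also be worth remarking that the resulting isomorphism is independent of the chosen sheaf $F$.
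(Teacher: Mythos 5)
Your proposal is correct and follows essentially the same route as the paper: reduce to $G_{\Tsite}(F) \cong \Aut\left(1_\C\right)$ via the natural isomorphism $\Z_{\Sh(\C, \J)} \cong G_{\Tsite}$, then use the bijection from Proposition~\ref{isotropyprop} together with Lemma~\ref{arrowequality} and check compatibility of the group operations. The only difference is that you spell out the multiplicativity computation (and the order reversal absorbed by the abelianness of $\Aut\left(1_\C\right)$), which the paper dismisses as ``easy to see''; that extra care is welcome but does not change the argument.
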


\begin{proof}
By the natural isomorphism $\Z_{\Sh(\C, \J)} \cong G_{\Tsite}$ (cf. the discussion following Definition \ref{commutesgenericallydefn}), it suffices to show that $G_{\Tsite}(F) \cong \Aut\left(1_\C\right)$. By Proposition \ref{isotropyprop}, we have a surjective assignment $G_{\Tsite}(F) \to \Aut\left(1_\C\right)$ given by $\left(\left[\alpha_{\psi_C}(
\x_C)\right]\right)_{C} \mapsto \psi$, which is well-defined by Lemma \ref{arrowequality}. It is clearly injective, and it is easy to see that it preserves group multiplication, so that it is the desired group isomorphism.    
\end{proof}

We have now shown that if $(\C, \J)$ is a small subcanonical site in which no object is covered by the empty sieve, then the covariant isotropy group functor of $\Sh(\C, \J)$ is constant on $\Aut\left(1_\C\right)$. We now wish to show that this result is still true when the assumption that no object is covered by the empty sieve is removed, and that a modified version of this result is true when the site is not assumed to be subcanonical. 

To prove the first of these claims, let $(\C, \J)$ be a small subcanonical site. If $C \in \Ob(\C)$ satisfies $\varnothing \in \J(C)$, it then follows that $C$ must be an initial object of $\C$. To see this, let $X \in \Ob(\C)$: then the representable presheaf $\C(-, X)$ is a sheaf because $(\C, \J)$ is subcanonical. Since $\varnothing \in \J(C)$, it then follows that $\C(C, X)$ has exactly one element (the amalgamation of the empty matching family), so that there is a unique morphism from $C$ to $X$, and $C$ is indeed initial in $\C$. We now have:

\begin{lem}
\label{emptycoverlemma}
Let $(\C, \J)$ be a small subcanonical site, and let $\D$ be the full subcategory of $\C$ generated by the objects $D \in \Ob(\C)$ with $\varnothing \notin \J(D)$. Then $\Aut\left(1_\C\right) \cong \Aut\left(1_\D\right)$. 
\end{lem}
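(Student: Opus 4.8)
The plan is to prove that the \textbf{restriction} map
\[ \rho \colon \Aut(1_\C) \longrightarrow \Aut(1_\D), \qquad \psi \longmapsto (\psi_D)_{D \in \Ob(\D)}, \]
is a group isomorphism. That $\rho$ is a well-defined group homomorphism is immediate from the fact that $\D$ is a \emph{full} subcategory of $\C$: every naturality square of $\psi$ indexed by a morphism of $\D$ is already a naturality square in $\C$; each component $\psi_D$ (for $D \in \Ob(\D)$) is an isomorphism in $\C$, hence in $\D$ by fullness; and vertical composition of natural transformations is computed componentwise. So the whole content of the lemma is that $\rho$ is bijective.

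The structural fact that makes this work is the observation recorded just before the statement: any $C \in \Ob(\C)$ with $\varnothing \in \J(C)$ is an initial object of $\C$. Hence $\Ob(\C) \setminus \Ob(\D)$ consists \emph{entirely} of initial objects, so for any such $C$ the hom-set $\Hom(C, C)$ is the singleton $\{\id_C\}$, and $\Hom(C, X)$ is a singleton for every $X \in \Ob(\C)$. I would first dispatch injectivity of $\rho$: if $\psi_D = \id_D$ for all $D \in \Ob(\D)$, then automatically $\psi_C = \id_C$ for every $C \notin \Ob(\D)$ since such $C$ has only the identity endomorphism, whence $\psi = \id_{1_\C}$.

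The substantial step is surjectivity. Given $\varphi \in \Aut(1_\D)$, I would extend it to a family $\psi$ on all of $\Ob(\C)$ by $\psi_D := \varphi_D$ for $D \in \Ob(\D)$ and $\psi_C := \id_C$ for $C \notin \Ob(\D)$ (the only possible choice, since such $C$ is initial). Each $\psi_C$ is then an automorphism of $C$, so it remains only to check naturality: $g \circ \psi_X = \psi_Y \circ g$ for every $g \colon X \to Y$ in $\C$. The case $X, Y \in \Ob(\D)$ is just naturality of $\varphi$; the case $X \notin \Ob(\D)$ is trivial because then $X$ is initial and $\Hom(X, Y)$ is a singleton. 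The one delicate case is $X \in \Ob(\D)$ but $Y \notin \Ob(\D)$, so that $Y$ is initial, $\psi_Y = \id_Y$, and we must show $g \circ \varphi_X = g$. Here the plan is to form the unique morphism $u \colon Y \to X$ and the idempotent $e := u \circ g \colon X \to X$, which \emph{lives in $\D$} because $\D$ is full; then, using $g \circ u = \id_Y$ (so $g = g \circ e$) and $g \circ \varphi_X \circ u = \id_Y$ (both by initiality of $Y$), together with naturality of $\varphi$ at $e$ (giving $e \circ \varphi_X = \varphi_X \circ e$), one obtains
\[ g \circ \varphi_X = (g \circ e) \circ \varphi_X = g \circ \varphi_X \circ e = g \circ \varphi_X \circ u \circ g = \id_Y \circ g = g. \]
Thus $\psi$ is natural, hence $\psi \in \Aut(1_\C)$ with $\rho(\psi) = \varphi$, and bijectivity of $\rho$ is complete.

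I expect this last case of the surjectivity argument to be the main obstacle: it is the only place where the mismatch between $\C$ and $\D$ bites, and the resolution hinges on the non-obvious move of recognising that a morphism $g$ into a deleted (necessarily initial) object still records, via the idempotent $u \circ g$ sitting inside $\D$, enough information for naturality of $\varphi$ to force $g \circ \varphi_X = g$. Everything else — that $\rho$ is a homomorphism, the injectivity argument, and the two easy naturality cases — is purely formal.
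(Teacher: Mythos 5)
Your proof is correct and follows essentially the same route as the paper's: the restriction homomorphism, injectivity via the initiality of the objects covered by the empty sieve, and surjectivity by extending with identity components at those (necessarily initial) objects. The only difference is that the paper dismisses naturality of the extension as clear, whereas you explicitly verify the one non-obvious square — a morphism $g : X \to Y$ with $X \in \Ob(\D)$ and $Y$ initial, where $\Hom(X, Y)$ need not be a singleton — via the idempotent $u \circ g$ lying in $\D$, and that check is valid.
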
 

\begin{proof}
Since $\D$ is a full subcategory of $\C$, there is a canonical group homomorphism $\varphi : \Aut\left(1_\C\right) \to \Aut\left(1_\D\right)$ given by restriction to $\D$, so it remains to show that $\varphi$ is bijective. To show injectivity, let $\psi \in \Aut\left(1_\C\right)$ with $\psi_D = \id_D$ for every $D \in \Ob(\D)$, and let us show that $\psi = 1_\C$, i.e. that $\psi_C = \id_C$ for every $C \in \Ob(\C)$ with $\varnothing \in \J(C)$. But if $\varnothing \in \J(C)$, then we have just shown that $C$ must be initial, so that we must indeed have $\psi_C = \id_C$, as desired. To show surjectivity, let $\chi \in \Aut\left(1_\D\right)$. We then define $\psi \in \Aut\left(1_\C\right)$ by setting $\psi_D := \chi_D$ for any $D \in \Ob(\D)$ and $\psi_C := \id_C$ for any $C \in \Ob(\C) \setminus \Ob(\D)$, as we must, since $\varnothing \in \J(C)$ and hence $C$ is initial. Then $\psi$ is clearly natural and $\varphi(\psi) = \chi$.          
\end{proof}

\noindent We can now show that the conclusion of Corollary \ref{isotropycor} remains true if the small subcanonical site $(\C, \J)$ has objects covered by the empty sieve:

\begin{prop}
\label{emptycoverprop}
Let $(\C, \J)$ be a small subcanonical site. Then the covariant isotropy group functor $\Z_{\Sh(\C, \J)} : \Sh(\C, \J) \to \Group$ is constant on $\Aut\left(1_\C\right)$.
\end{prop}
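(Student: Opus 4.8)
The plan is to reduce the case of a general small subcanonical site $(\C, \J)$ to the case already handled in Corollary \ref{isotropycor}, where no object is covered by the empty sieve, by passing to the full subcategory $\D \hookrightarrow \C$ on those objects $D$ with $\varnothing \notin \J(D)$. The key observation is that the objects $C$ with $\varnothing \in \J(C)$ are exactly the initial objects of $\C$ (as established in the discussion preceding Lemma \ref{emptycoverlemma}), and that removing initial objects should not change the sheaf topos: a sheaf on $(\C, \J)$ is determined by its restriction to $\D$, since its value at an initial object $C$ is forced to be a singleton by the empty-cover sheaf condition. More precisely, I would first equip $\D$ with the induced Grothendieck topology $\J|_\D$ (the covering sieves of $D \in \Ob(\D)$ that lie in $\J(D)$, restricted to $\D$), check that $(\D, \J|_\D)$ is again a small subcanonical site, and verify that it has no object covered by the empty sieve (which is immediate from the definition of $\D$). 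Then I would argue that restriction along $\D \hookrightarrow \C$ induces an equivalence of categories $\Sh(\C, \J) \simeq \Sh(\D, \J|_\D)$.

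Granting that equivalence, the argument concludes quickly: the covariant isotropy group functor is invariant under equivalence of categories (since it is defined purely in terms of slice categories and natural transformations, hence preserved by any equivalence), so $\Z_{\Sh(\C, \J)} \cong \Z_{\Sh(\D, \J|_\D)}$ as functors. By Corollary \ref{isotropycor} applied to the site $(\D, \J|_\D)$, the latter functor is constant on $\Aut(1_\D)$. Finally, by Lemma \ref{emptycoverlemma} we have $\Aut(1_\D) \cong \Aut(1_\C)$, so $\Z_{\Sh(\C, \J)}$ is constant on $\Aut(1_\C)$, as desired. One should also check that the composite isomorphism is the expected one, i.e. that under these identifications a natural automorphism $\psi$ of $1_\C$ acts on a sheaf $G$ via $G(\psi_{(-)})$, which matches the description in the right-to-left inclusion of Proposition \ref{isotropyprop}; this is routine given how the equivalence and Lemma \ref{emptycoverlemma} are set up.

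The main obstacle is establishing the equivalence $\Sh(\C, \J) \simeq \Sh(\D, \J|_\D)$ with due care. One must confirm that the induced topology $\J|_\D$ is well-defined (the restriction of a covering sieve on $D$ to arrows with domain in $\D$ is still covering — here one uses that the objects being discarded are initial, so they contribute essentially nothing to any sieve, and that any sieve on $D \in \Ob(\D)$ generated by arrows out of non-initial objects remains covering), and that restriction of presheaves sends sheaves to sheaves and is essentially surjective onto sheaves with a fully faithful inverse given by right Kan extension. The cleanest route is probably to invoke the Comparison Lemma for sites (that a suitable dense or "$\J$-dense" full subcategory inclusion induces an equivalence of sheaf toposes), after checking that $\D \hookrightarrow \C$ satisfies its hypotheses; alternatively one can give a direct hands-on argument: define the inverse functor on a sheaf $H$ on $\D$ by leaving $H$ unchanged on $\Ob(\D)$ and setting its value at an initial object $C$ to be the terminal set $\{*\}$ with the unique restriction maps, and check the sheaf condition at $C$ holds trivially (the only cover of $C$ that matters is $\varnothing \in \J(C)$, whose unique matching family has the unique amalgamation $*$) while the sheaf condition elsewhere is inherited.

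Finally I would note for completeness that since every Grothendieck topos is the sheaf topos of some small subcanonical site (its canonical site), the combination of Proposition \ref{isotropyprop}, Corollary \ref{isotropycor}, and this Proposition already yields a characterization of the covariant isotropy — and hence the centre — of an arbitrary Grothendieck topos in terms of the automorphisms of the identity functor of a defining site; the subsequent treatment of non-subcanonical sites then serves to make this characterization available directly from an arbitrary presenting site.
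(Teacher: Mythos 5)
Your proposal is correct and follows essentially the same route as the paper: pass to the full subcategory $\D$ of objects not covered by the empty sieve, equip it with the induced (subcanonical) topology, invoke the Comparison Lemma to get $\Sh(\C, \J) \simeq \Sh(\D, \K)$, use invariance of covariant isotropy under equivalence, and conclude via Corollary \ref{isotropycor} and Lemma \ref{emptycoverlemma}. The only point to make sure of when writing it up is the explicit verification of the Comparison Lemma's density hypothesis (every object of $\C$ has a $\J$-cover by objects of $\D$ --- the maximal sieve if $C \in \Ob(\D)$, the empty sieve otherwise), which you gesture at and the paper spells out.
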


\begin{proof}
Let $\D$ be the full subcategory of $\C$ generated by the objects $D \in \Ob(\C)$ with $\varnothing \notin \J(D)$, and let $\K$ be the Grothendieck topology on $\D$ induced by the Grothendieck topology $\J$ on $\C$, so that a sieve $S$ on $D \in \Ob(\D)$ belongs to $\K(D)$ iff the sieve on $D$ that $S$ generates in $\C$ belongs to $\J(D)$ (cf. \cite[Page 112]{MM}). Since $\J$ is subcanonical, it follows by the proof of \cite[Appendix, Corollary 4.3]{MM} that $\K$ is subcanonical. We now claim that every object $C$ of $\C$ has a $\J$-cover by objects of $\D$, i.e. that there is some $J \in \J(C)$ such that every morphism of $J$ factors through some object of $\D$. This is certainly true if $C \in \Ob(\D)$, considering (e.g.) the maximal sieve on $C$. And if $C \notin \Ob(\D)$, so that $\varnothing \in \J(C)$, then $C$ trivially has a $\J$-cover by objects of $\D$, namely the empty sieve. So by the Comparison Lemma (cf. \cite[Appendix, Corollary 4.3]{MM}), we obtain a canonical equivalence of categories $\Sh(\C, \J) \simeq \Sh(\D, \K)$. It now suffices to show that $\Z_{\Sh(\D, \K)} : \Sh(\D, \K) \to \Group$ is constant on $\Aut\left(1_\C\right)$, since covariant isotropy is invariant under equivalence (cf. \cite[8.4]{Funk}). To achieve this, it suffices by Corollary \ref{isotropycor} and Lemma \ref{emptycoverlemma} to show that no object of $\D$ is $\K$-covered by the empty sieve. But if $D \in \Ob(\D)$ and $\varnothing \in \K(D)$, then by definition of $\K$ this would imply that $\varnothing \in \J(D)$, which contradicts the definition of $\D$.     
\end{proof}

From Proposition \ref{emptycoverprop} we can now deduce an explicit characterization (in terms of (extended) inner automorphisms) of the covariant isotropy group of any sheaf over a small subcanonical site. Recall that an \emph{extended inner automorphism} of $F \in \Sh(\C, \J)$ is just an element of $\Z_{\Sh(\C, \J)}(F)$, i.e. a natural automorphism of the projection functor $F/\Sh(\C, \J) \to \Sh(\C, \J)$, while an \emph{inner automorphism} of $F$ is an automorphism $\gamma : F \xrightarrow{\sim} F$ with $\gamma = \pi_{\id_F}$ for some extended inner automorphism $\pi$ of $F$.

\begin{cor}
\label{subcanonicalcor}
Let $(\C, \J)$ be a small subcanonical site, and let $F \in \Sh(\C, \J)$. 
\begin{itemize}
\item If $\pi = \left(\pi_\gamma : \cod(\gamma) \to \cod(\gamma)\right)_{\gamma \in \Dom(F)}$ is a (not necessarily natural) $\Dom(F)$-indexed family of endomorphisms in $\Sh(\C, \J)$, then $\pi \in \Z_{\Sh(\C, \J)}(F)$ iff there is a unique $\psi \in \Aut\left(1_{\C}\right)$ with 
\[ \left(\pi_\gamma\right)_C = G(\psi_C) : GC \xrightarrow{\sim} GC \] for any natural transformation $\gamma : F \to G$ in $\Sh(\C, \J)$. 

\item A natural endomorphism $\gamma : F \to F$ is an inner automorphism iff there is some $\psi \in \Aut\left(1_\C\right)$ with $\gamma_C = F(\psi_C)$ for every $C \in \Ob(\C)$.  
\end{itemize}
\end{cor}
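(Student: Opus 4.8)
The plan is to derive Corollary \ref{subcanonicalcor} as a direct unpacking of Proposition \ref{emptycoverprop} together with the general relationship between the covariant isotropy functor $\Z_\C$ and $\Dom(C)$-indexed families of endomorphisms described in the Introduction. By Proposition \ref{emptycoverprop}, the functor $\Z_{\Sh(\C, \J)}$ is constant on the group $\Aut(1_\C)$; I would first recall exactly what ``constant on $\Aut(1_\C)$'' means in terms of the action on objects. Tracing through the proof of Proposition \ref{emptycoverprop} (which reduces via the Comparison Lemma to the subcanonical site $(\D, \K)$ with no object $\K$-covered by the empty sieve, and then applies Corollary \ref{isotropycor} and Proposition \ref{isotropyprop}), and through the right-to-left inclusion in the proof of Proposition \ref{isotropyprop}, one sees that the isomorphism $\Z_{\Sh(\C,\J)}(F) \cong \Aut(1_\C)$ is witnessed by: given $\psi \in \Aut(1_\C)$, the associated element $\pi \in \Z_{\Sh(\C,\J)}(F)$ has component $(\pi_\gamma)_C := G(\psi_C) : GC \xrightarrow{\sim} GC$ for each natural transformation $\gamma : F \to G$ and each $C \in \Ob(\C)$. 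So the first bullet point is essentially the assertion that \emph{every} element of $\Z_{\Sh(\C,\J)}(F)$ arises this way, and uniquely so.

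For the forward direction of the first bullet, suppose $\pi = (\pi_\gamma)_{\gamma \in \Dom(F)} \in \Z_{\Sh(\C, \J)}(F)$. By Proposition \ref{emptycoverprop} there is some $\psi \in \Aut(1_\C)$ giving rise to $\pi$ in the manner just described, so $(\pi_\gamma)_C = G(\psi_C)$ for all $\gamma : F \to G$ and all $C$; in particular each $\pi_\gamma$ is an isomorphism (a component-wise isomorphism of sheaves is an isomorphism), which is compatible with the statement. Uniqueness of $\psi$ follows because $\psi$ can be recovered from $\pi$ as follows: take $\gamma = \id_F : F \to F$, so that $(\pi_{\id_F})_C = F(\psi_C)$ for every $C$ — but this only pins down $\psi$ if $F$ is faithful enough, which it need not be. The cleaner argument for uniqueness: work instead over the reduced site $(\D,\K)$ and appeal to Corollary \ref{isotropycor}, whose proof exhibits the assignment $\Z \to \Aut(1_\D) \cong \Aut(1_\C)$ as an explicit \emph{bijective} group homomorphism; composing with the Comparison-Lemma equivalence gives uniqueness of $\psi$ over $\C$. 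Conversely, if such a $\psi$ exists, then defining $\pi_\gamma$ by $(\pi_\gamma)_C := G(\psi_C)$ gives a well-defined element of $\Z_{\Sh(\C,\J)}(F)$ by the right-to-left inclusion of Proposition \ref{isotropyprop} (applied on the reduced site and transported back), so $\pi \in \Z_{\Sh(\C,\J)}(F)$.

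The second bullet is then immediate from the first by specializing to $\gamma = \id_F$: a natural endomorphism $\gamma : F \to F$ is an inner automorphism iff $\gamma = \pi_{\id_F}$ for some $\pi \in \Z_{\Sh(\C,\J)}(F)$, which by the first bullet holds iff there is $\psi \in \Aut(1_\C)$ with $\gamma_C = (\pi_{\id_F})_C = F(\psi_C)$ for every $C \in \Ob(\C)$ (and such a $\gamma$ is automatically an isomorphism, since each $F(\psi_C)$ is). The main obstacle I anticipate is not any deep content but the bookkeeping around the Comparison-Lemma equivalence $\Sh(\C,\J) \simeq \Sh(\D,\K)$: one must check that the identification $\Z_{\Sh(\C,\J)}(F) \cong \Z_{\Sh(\D,\K)}(F')$ (for $F'$ the transported sheaf) intertwines the two descriptions ``$(\pi_\gamma)_C = G(\psi_C)$ over $\C$'' and the analogous statement over $\D$, using that $\psi$ and its restriction to $\D$ determine each other by Lemma \ref{emptycoverlemma}, and that the endofunctor actions $G(\psi_C)$ only involve objects $C$, whose behaviour on the $\varnothing$-covered (hence initial) objects is forced. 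Once that compatibility is recorded, the corollary follows formally.
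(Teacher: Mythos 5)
Your proposal is correct and follows essentially the same route as the paper: the first bullet is obtained from Proposition \ref{emptycoverprop} together with the explicit form of the isomorphism $\Z_{\Sh(\C,\J)}(F) \cong \Aut\left(1_\C\right)$ (which the paper outsources to a corollary of the thesis rather than re-tracing through the Comparison Lemma as you do), and the second bullet follows by specializing to $\gamma = \id_F$ exactly as in the paper. Your observation that uniqueness of $\psi$ cannot be read off from $\pi_{\id_F}$ alone, and must instead come from the injectivity of the assignment $\psi \mapsto \pi$ established in Corollary \ref{isotropycor} (via Lemma \ref{arrowequality}), is a correct and worthwhile refinement of a point the paper leaves implicit.
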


\begin{proof}
The first assertion follows from Proposition \ref{emptycoverprop} and \cite[Corollary 2.2.42]{thesis}. For the second assertion, if $\gamma$ is an inner automorphism, then there is some $\pi \in \Z_{\Sh(\C, \J)}(F)$ with $\gamma = \pi_{\id_F}$. From the first assertion, we deduce the existence of some $\psi \in \Aut\left(1_\C\right)$ with $\gamma_C = \left(\pi_{\id_F}\right)_C = F(\psi_C)$ for every $C \in \Ob(\C)$, as desired. Conversely, suppose that there is some $\psi \in \Aut\left(1_\C\right)$ with $\gamma_C = F(\psi_C)$ for every $C \in \Ob(\C)$. For any natural transformation $\theta : F \to G$ in $\Sh(\C, \J)$, define $\pi_\theta : G \xrightarrow{\sim} G$ by $\left(\pi_\theta\right)_C := G(\psi_C)$ for every $C \in \Ob(\C)$. Then from the first assertion we have $\pi := \left(\pi_\theta\right)_{\theta \in \Dom(F)} \in \Z_{\Sh(\C, \J)}(F)$ with $\pi_{\id_F} = \gamma$, so that $\gamma$ is an inner automorphism.   
\end{proof}

\noindent Having removed the assumption that no object is covered by the empty sieve, we now remove the assumption that the site is subcanonical (while obtaining a modified result). First, we require the following lemma. Recall that a full subcategory $\C$ of a category $\E$ is \emph{dense} if for any $E \in \Ob(\E)$, the canonical cocone $(f)_{f \in \C/E}$ for the projection functor $\Gamma_E : \C/E \to \E$ is a colimit cocone, where $\C/E$ is the full subcategory of the slice category $\E/E$ on those morphisms with domain in $\C$. 

\begin{lem}
\label{denselemma}
Let $\E$ be a (locally small) category with small full dense subcategory $\C \hookrightarrow \E$. Then $\Aut\left(1_\C\right) \cong \Aut\left(1_\E\right)$. 
\end{lem}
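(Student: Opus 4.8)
The plan is to construct a map $\Aut(1_\E) \to \Aut(1_\C)$ by restriction and a map $\Aut(1_\C) \to \Aut(1_\E)$ using density, and show these are mutually inverse group homomorphisms. Since $\C$ is a full subcategory of $\E$, any natural automorphism $\Psi$ of $1_\E$ restricts to a natural automorphism $\Psi|_\C$ of $1_\C$ (the naturality squares for morphisms of $\C$ are a subset of those for morphisms of $\E$, and $\Psi_C : C \to C$ is an isomorphism in $\E$, hence in $\C$ since $\C$ is full). This assignment $\rho : \Aut(1_\E) \to \Aut(1_\C)$ is visibly a group homomorphism.

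Next I would construct the inverse. Given $\psi \in \Aut(1_\C)$ and any $E \in \Ob(\E)$, I want to define $\Psi_E : E \to E$. Using density, $E$ is the colimit of the projection functor $\Gamma_E : \C/E \to \E$ with colimit cocone $(f : \dom(f) \to E)_{f \in \C/E}$. The family $(f \circ \psi_{\dom(f)})_{f \in \C/E}$ is another cocone on $\Gamma_E$: for any morphism $g : (f_1 : C_1 \to E) \to (f_2 : C_2 \to E)$ in $\C/E$ (so $g : C_1 \to C_2$ in $\C$ with $f_2 \circ g = f_1$), naturality of $\psi$ gives $g \circ \psi_{C_1} = \psi_{C_2} \circ g$, hence $f_2 \circ g \circ \psi_{C_1} = f_2 \circ \psi_{C_2} \circ g$... wait, I need $f_1 \circ \psi_{C_1} = (f_2 \circ \psi_{C_2}) \circ g$, i.e. $f_2 \circ g \circ \psi_{C_1} = f_2 \circ \psi_{C_2} \circ g$, which follows from $g \circ \psi_{C_1} = \psi_{C_2} \circ g$. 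So by the colimit universal property there is a unique $\Psi_E : E \to E$ with $\Psi_E \circ f = f \circ \psi_{\dom(f)}$ for all $f \in \C/E$. I then check: (i) $\Psi_E$ is an isomorphism — apply the same construction to $\psi^{-1}$ to get $\Psi'_E$, and verify $\Psi_E \circ \Psi'_E$ and $\Psi'_E \circ \Psi_E$ both satisfy the defining property of the identity cocone map, hence equal $\id_E$; (ii) $\Psi = (\Psi_E)_{E}$ is natural in $E$ — for $h : E \to E'$ in $\E$, both $h \circ \Psi_E$ and $\Psi_{E'} \circ h$ are, when precomposed with any $f \in \C/E$, equal to $h \circ f \circ \psi_{\dom(f)}$ (using that $h \circ f \in \C/E'$), so they agree by uniqueness in the colimit for $E$; (iii) for $C \in \Ob(\C)$, $\Psi_C = \psi_C$ — because $\id_C \in \C/C$, so $\Psi_C \circ \id_C = \id_C \circ \psi_C$, i.e. $\Psi_C = \psi_C$. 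This last point shows the assignment $\sigma : \psi \mapsto \Psi$ satisfies $\rho \circ \sigma = \id_{\Aut(1_\C)}$.

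Finally I would check $\sigma \circ \rho = \id_{\Aut(1_\E)}$: given $\Phi \in \Aut(1_\E)$, its restriction $\psi = \Phi|_\C$ has $\psi_C = \Phi_C$, and then $\sigma(\psi)_E$ is the unique map with $\sigma(\psi)_E \circ f = f \circ \Phi_{\dom(f)}$; but by naturality of $\Phi$ applied to $f : \dom(f) \to E$ we have $\Phi_E \circ f = f \circ \Phi_{\dom(f)}$, so $\Phi_E$ itself has the defining property, whence $\sigma(\psi)_E = \Phi_E$. It remains to note $\sigma$ is a group homomorphism, which follows from uniqueness in the colimit (the composite cocone map for $\psi \cdot \psi'$ agrees with the composite of the cocone maps), or simply because $\sigma$ is a bijective two-sided inverse to the homomorphism $\rho$ and hence automatically a homomorphism.

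The main obstacle — really the only delicate point — is verifying well-definedness and naturality of $\Psi_E$ cleanly, i.e. checking that $(f \circ \psi_{\dom(f)})_{f \in \C/E}$ genuinely is a cocone under $\Gamma_E$ and that the induced maps assemble naturally in $E$; everything else is a routine uniqueness argument. There is also a mild size/foundational remark: density of $\C$ in $\E$ with $\C$ small makes each relevant colimit exist as a colimit of a small diagram, so the universal property is available; I would mention this in passing but not dwell on it.
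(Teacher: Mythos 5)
Your proposal is correct and follows essentially the same route as the paper: both use the restriction homomorphism in one direction and, in the other, extend $\psi \in \Aut(1_\C)$ to all of $\E$ by factoring the cocone $(f \circ \psi_{\dom(f)})_{f \in \C/E}$ through the colimit cocone supplied by density, then verify invertibility, naturality, and agreement on $\C$ via joint epimorphy of the colimit legs. The only cosmetic difference is that you exhibit explicit mutually inverse maps where the paper proves the restriction map is injective and surjective, which amounts to the same verifications.
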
 

\begin{proof}
We have a canonical group homomorphism $\varphi : \Aut\left(1_\E\right) \xrightarrow{\sim} \Aut\left(1_\C\right)$ given by restriction to the full subcategory $\C \hookrightarrow \E$. To show that $\varphi$ is injective, let $\alpha \in \Aut\left(1_\E\right)$ with $\alpha_C = \id_C$ for every $C \in \Ob(\C)$, and let us show that $\alpha_E = \id_E$ for every $E \in \Ob(\E)$. Since the set $\Ob(\C)$ is in particular a generator of $\E$, it suffices to show that $\alpha_E \circ f = \id_E \circ f = f$ for every morphism $f : C \to E$ with $C \in \Ob(\C)$. But by hypothesis and naturality of $\alpha$ we do indeed have $\alpha_E \circ f = f \circ \alpha_C = f \circ \id_C = f$. So $\varphi$ is injective. 

To show that $\varphi$ is surjective, let $\beta \in \Aut\left(1_\C\right)$, and let us extend $\beta$ to a natural automorphism $\alpha : 1_\E \xrightarrow{\sim} 1_\E$. Let $E \in \Ob(\E)$ be arbitrary, and let $\C/E$ be the full subcategory of $\E/E$ generated by the morphisms $f : C \to E$ with $C \in \Ob(\C)$. We have the projection functor $\Gamma_E : \C/E \to \E$ and canonical cocone $(f)_{f \in \C/E}$ on $\Gamma_E$ with vertex $E$, which is a colimit cocone because $\C$ is dense. We also have another cocone $\left(f \circ \beta_{\dom(f)} : \dom(f) \to E\right)_{f \in \C/E}$ on $\Gamma_E$ with vertex $E$, since for any commutative triangle
% https://q.uiver.app/?q=WzAsMyxbMCwwLCJDIl0sWzIsMCwiQyciXSxbMSwxLCJcXGJ1bGxldCJdLFswLDEsImciXSxbMCwyLCJmIiwyXSxbMSwyLCJmJyJdXQ==
\[\begin{tikzcd}
	C && {C'} \\
	& E
	\arrow["g", from=1-1, to=1-3]
	\arrow["f"', from=1-1, to=2-2]
	\arrow["{f'}", from=1-3, to=2-2]
\end{tikzcd}\]
in $\E$ with $C, C' \in \Ob(\C)$ we have
\[ f' \circ \beta_{C'} \circ g = f' \circ g \circ \beta_C = f \circ \beta_C \] by naturality of $\beta$. So we obtain a unique factorization $\alpha_E : E \to E$ of this cocone through the colimit cocone $(f)_{f \in \C/E}$, with $\alpha_E \circ f = f \circ \beta_C$ for any $f : C \to E$ in $\C/E$.  

Now we show that $\alpha_E$ is an isomorphism. Analogously to how we obtained $\alpha_E$, we obtain a unique morphism $\alpha_E^{-1} : E \to E$ with $\alpha_E^{-1} \circ f = f \circ \beta_C^{-1}$ for any $f : C \to E$ in $\C/E$. Then for any $f : C \to E$ in $\C/E$ we have
\[ \alpha_E \circ \alpha_E^{-1} \circ f = \alpha_E \circ f \circ \beta_C^{-1} = f \circ \beta_C \circ \beta_C^{-1} = f \circ \id_C = f, \] so that $\alpha_E \circ \alpha_E^{-1} = \id_E$ because $\Ob(\C)$ is a generator, and similarly we have $\alpha_E^{-1} \circ \alpha_E = \id_E$, so that $\alpha_E$ is indeed an isomorphism. 

For any $C \in \Ob(\C)$ we do indeed have $\alpha_C = \beta_C$, since we have $\id_C : C \to C$ in $\C/C$ and hence $\alpha_C = \alpha_C \circ \id_C = \id_C \circ \beta_C = \beta_C$ by definition of $\alpha_C$. So $\alpha$ does extend $\beta$, and it remains to show that $\alpha$ is natural. So let $h : E \to E'$ be a morphism in $\E$, and let us show that $h \circ \alpha_E = \alpha_{E'} \circ h : E \to E'$. For any $f : C \to E$ in $\C/E$ we have
\[ h \circ \alpha_E \circ f = h \circ f \circ \beta_C = \alpha_{E'} \circ h \circ f \] by the definitions of $\alpha_E$ and $\alpha_{E'}$, so that the desired naturality equation holds because $\Ob(\C)$ is a generator of $\E$. This completes the proof that $\varphi : \Aut\left(1_\E\right) \xrightarrow{\sim} \Aut\left(1_\C\right)$ is surjective and hence a group isomorphism.       
\end{proof}

\noindent We can now remove the assumption that the site is subcanonical. If $(\C, \J)$ is a small site, then we write $\as\yo\C$ for the full subcategory of $\Sh(\C, \J)$ generated by the objects in the image of $\C \xrightarrow{\yo} \Set^{\C^\op} \xrightarrow{\as} \Sh(\C, \J)$, i.e. by the sheafifications of the representable presheaves.

\begin{theo}
\label{arbitraryprop}
Let $(\C, \J)$ be an arbitrary small site. 
\begin{itemize}
\item The covariant isotropy group functor $\Z_{\Sh(\C, \J)} : \Sh(\C, \J) \to \Group$ is constant on $\Aut\left(1_{\as\yo\C}\right)$.

\item The centre $\Aut\left(1_{\Sh(\C, \J)}\right)$ of $\Sh(\C, \J)$ is isomorphic to $\Aut\left(1_{\as\yo\C}\right)$. In particular, if $(\C, \J)$ is subcanonical, then $\Aut\left(1_{\Sh(\C, \J)}\right) \cong \Aut\left(1_\C\right)$.    
\end{itemize} 
\end{theo}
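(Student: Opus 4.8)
The plan is to reduce the general case to the subcanonical case already settled in Proposition~\ref{emptycoverprop}, by exhibiting $\as\yo\C$, suitably topologized, as a \emph{subcanonical} site of definition for $\Sh(\C, \J)$. Write $\E := \Sh(\C, \J)$ and $\D := \as\yo\C$, the small full subcategory of $\E$ on the sheafifications of the representables. The first step is to show that $\D$ is a \emph{dense} full subcategory of $\E$. For this I would argue as follows: sheafification $\as : \Set^{\C^\op} \to \E$ is a left adjoint (to the inclusion $\iota$ of sheaves into presheaves), hence preserves colimits; and for any $F \in \E$ the adjunction together with $\iota F = F$ being a sheaf identifies the comma category $\D/F$ of maps $\as\yo C \to F$ (with $C \in \Ob(\C)$) with the category of elements of $F$, in such a way that the projection $\D/F \to \E$ corresponds to $\as$ applied to the diagram $\int F \to \Set^{\C^\op}$, $(x \in F(C)) \mapsto \yo C$. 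Since the Yoneda embedding is dense in $\Set^{\C^\op}$, the canonical cocone exhibits $F$ as the colimit of the latter diagram; applying the cocontinuous functor $\as$ and using $\as F \cong F$ then shows that the canonical cocone on the projection $\D/F \to \E$ is a colimit cocone. Hence $\D$ is dense in $\E$.

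Granting density, the second bullet is immediate: Lemma~\ref{denselemma} applied to $\D \hookrightarrow \E$ gives $\Aut(1_{\Sh(\C, \J)}) \cong \Aut(1_{\as\yo\C})$. For the final assertion of that bullet, if $(\C, \J)$ is subcanonical then each unit $\yo C \to \as\yo C$ is an isomorphism, so $\yo : \C \to \E$ is fully faithful with essential image exactly $\D = \as\yo\C$; thus $\C \simeq \as\yo\C$ and therefore $\Aut(1_{\as\yo\C}) \cong \Aut(1_\C)$, giving $\Aut(1_{\Sh(\C, \J)}) \cong \Aut(1_\C)$.

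For the first bullet, the plan is to equip $\D$ with the Grothendieck topology $\K$ whose covering sieves on an object are exactly those sieves whose members form an epimorphic family in $\E$. Because $\D$ is a small dense full subcategory of the Grothendieck topos $\E$, the dense-subcategory form of the Comparison Lemma (cf.~\cite[Appendix]{MM}) yields an equivalence $\E \simeq \Sh(\D, \K)$ via the nerve functor $E \mapsto \E(\iota-, E)$; moreover $\K$ is subcanonical, since for each $D \in \Ob(\D)$ the representable presheaf $\D(-, D)$ is the nerve of $D$ and hence a sheaf. One can then apply Proposition~\ref{emptycoverprop} to the small subcanonical site $(\D, \K)$ to conclude that $\Z_{\Sh(\D, \K)}$ is constant on $\Aut(1_\D) = \Aut(1_{\as\yo\C})$, and transport this along the equivalence $\Sh(\C, \J) \simeq \Sh(\D, \K)$ using the invariance of covariant isotropy under equivalence of categories (cf.~\cite[8.4]{Funk}). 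This yields that $\Z_{\Sh(\C, \J)}$ is constant on $\Aut(1_{\as\yo\C})$.

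The main obstacle is getting the site-theoretic input right: (i) the verification that $\as\yo\C$ is genuinely dense in $\Sh(\C, \J)$ — in particular that the relevant colimit cocone is preserved by sheafification and matches the canonical cocone — and (ii) the dense-subcategory Comparison Lemma giving $\Sh(\C, \J) \simeq \Sh(\D, \K)$ with $\K$ subcanonical. Both are standard facts about Grothendieck toposes (the construction of a subcanonical site of definition is essentially classical), so I would cite them rather than reprove them; the remainder is just an assembly of Lemma~\ref{denselemma}, Proposition~\ref{emptycoverprop}, and invariance of isotropy under equivalence.
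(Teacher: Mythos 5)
Your proposal is correct and follows essentially the same route as the paper: transfer to a subcanonical site of definition on $\as\yo\C$ via the Comparison Lemma, apply Proposition \ref{emptycoverprop} together with the invariance of covariant isotropy under equivalence for the first bullet, and combine density of $\as\yo\C$ with Lemma \ref{denselemma} (plus full faithfulness of $\as\yo$ in the subcanonical case) for the second. The only difference is that you sketch the density and comparison arguments rather than just citing \cite[Appendix]{MM} and \cite[Page 139]{MM} as the paper does; both are standard and your sketches are sound.
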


\begin{proof}
It is well-known (cf. \cite[Appendix, Corollary 4.2]{MM}) that there is a subcanonical topology $\K$ on the small full subcategory $\as\yo\C \hookrightarrow \Sh(\C, \J)$ for which $\Sh(\C, \J) \simeq \Sh(\as\yo\C, \K)$. Since covariant isotropy is invariant under equivalence (cf. \cite[8.4]{Funk}), we then obtain by Proposition \ref{emptycoverprop} that $\Z_{\Sh(\C, \J)}$ is constant on $\Aut\left(1_{\as\yo\C}\right)$. It is also well-known that the small full subcategory $\as\yo\C \hookrightarrow \Sh(\C, \J)$ is dense (cf. \cite[Page 139]{MM}), so that the first part of the second assertion follows from Lemma \ref{denselemma}. The second part of the second assertion then follows because $(\C, \J)$ being subcanonical implies that the composite $\C \xrightarrow{\yo} \Set^{\C^\op} \xrightarrow{\as} \Sh(\C, \J)$ is full and faithful, so that $\as\yo\C \cong \C$. 
\end{proof}

\begin{rmk}
\label{presheafresult}
{\em
In \cite[Corollary 5.3.6]{thesis}, the author showed that the covariant isotropy group functor of any \emph{presheaf} topos $\Set^{\C^\op}$ is constant on $\Aut\left(1_\C\right)$. Since $\Set^{\C^\op} = \Sh(\C, T)$ where $T$ is the trivial (subcanonical) Grothendieck topology on $\C$ (where the only covering sieves are the maximal ones), the aforementioned result for presheaf toposes is now a special case of Proposition \ref{emptycoverprop} and Theorem \ref{arbitraryprop}. \qed
}
\end{rmk} 

\begin{rmk}
\label{contravariant}
{\em 
Theorem \ref{arbitraryprop} illustrates a major difference between the covariant isotropy group $\Sh(\C, \J) \to \Group$ and the \emph{contravariant} isotropy group $\Sh(\C, \J)^\op \to \Group$ (which sends any $F \in \Sh(\C, \J)$ to the automorphism group of the projection functor $\Sh(\C, \J)/F \to \Sh(\C, \J)$). As shown in \cite[4.3]{Funk}, the latter functor is generally \emph{not} constant, and is in fact \emph{representable} by a sheaf of groups $Z : \C^\op \to \Group$. \qed   
}
\end{rmk} 

\begin{rmk}
\label{openquestion}
{\em
If the site $(\C, \J)$ is \emph{not} subcanonical, then in general there is no relation between $\Aut\left(1_\C\right)$ and $\Aut\left(1_{\as\yo\C}\right)$. For (an extreme) example, let $\C$ be any small category whose centre $\Aut\left(1_\C\right)$ is non-trivial (e.g. $\C$ could be the one-object category corresponding to any non-trivial abelian group), and let $\J$ be the topology on $\C$ in which \emph{every} sieve (including the empty sieve) is covering. Then the only sheaf for this site is the terminal presheaf, so that $\J$ is not subcanonical (because otherwise $\C$ would be (equivalent to) a preorder, contradicting the fact that $\Aut\left(1_\C\right)$ is non-trivial), and moreover $\Sh(\C, \J)$ is the trivial category. So then $\Aut\left(1_{\as\yo\C}\right) \cong \Aut\left(1_{\Sh(\C, \J)}\right)$ is trivial, even though $\Aut\left(1_\C\right)$ was assumed non-trivial. 
However, it is an open question whether it is somehow possible to characterize $\Aut\left(1_{\as\yo\C}\right)$ in terms of both $\Aut\left(1_\C\right)$ \emph{and} the topology $\J$. \qed  
} 
\end{rmk}

\begin{rmk}
\label{syntacticapproach}
{\em 
The reader may wonder why we have chosen a mostly logical or syntactic approach to characterizing the covariant isotropy group of a Grothendieck topos. While it may have been possible to develop a more purely \emph{categorical} proof, since the logical techniques for computing covariant isotropy in \cite{MFPSpaper} and \cite{thesis} were already available (albeit requiring extension to the bounded infinitary context), we felt that these would yield the most expedient proof. Perhaps more relevant, however, is the fact that in future work we hope to characterize the covariant isotropy group of the category of sheaves valued in $\Tmod$ for a (single-sorted) quasi-equational theory $\T$ (e.g. sheaves of groups or rings), and more generally, the covariant isotropy group of the category of models of one quasi-equational theory valued in the category of models of another such theory. To achieve this, it seems that a logical or syntactic approach will be optimal, if not essential. Hence, with these latter aims in mind, we decided to pursue a mostly logical approach in this paper. \qed
} 
\end{rmk}

\section{Appendix}

\begin{lem*}[\ref{arrowequality}]
Let $(\C, \J)$ be a small subcanonical site in which no object is covered by the empty sieve, and let $F \in \Sh(\C, \J)$. If $f, g : D \to C$ are parallel morphisms in $\C$ with $\Tsite(F, \x_C) \vdash \alpha_f(\x_C) = \alpha_g(\x_C)$, then $f = g$.   
\end{lem*}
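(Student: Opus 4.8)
The plan is to prove the contrapositive by a semantic argument: given parallel morphisms $f, g : D \to C$ with $f \neq g$, I will exhibit a single model of $\Tsite(F, \x_C)$ in which the closed terms $\alpha_f(\x_C)$ and $\alpha_g(\x_C)$ receive \emph{distinct} interpretations, so that by soundness of partial Horn logic the sequent $\alpha_f(\x_C) = \alpha_g(\x_C)$ cannot be provable in $\Tsite(F, \x_C)$. The model will be obtained by sheafifying the coproduct presheaf $F + \C(-, C)$.

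First I would observe that since $(\C, \J)$ is subcanonical, the representable presheaf $\C(-, C) : \C^\op \to \Set$ is a sheaf, and hence by Lemma \ref{separatedlemma} (which uses that no object of $\C$ is covered by the empty sieve) the coproduct presheaf $P := F + \C(-, C)$ is separated. As usual I take this coproduct to be computed pointwise, so that $P(E) = F(E) \sqcup \C(E, C)$ for every $E \in \Ob(\C)$. Let $G := \as P$ be its sheafification, with unit $\eta : P \to G$. Because $P$ is separated, $G$ is obtained by a single application of the plus-construction and the unit $\eta$ is a monomorphism, i.e.\ each $\eta_E : P(E) \to G(E)$ is injective (cf.\ \cite[III.5]{MM}). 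Since $G$ is a sheaf it is a model of $\Tsite$; the composite $h : F \hookrightarrow P \xrightarrow{\eta} G$ is a morphism of sheaves; and setting $x := \eta_C(\id_C) \in G(C)$, the triple $N := (G, h, x)$ is a model of $\Tsite(F, \x_C)$ --- the object constants $c_a$ being interpreted via $h$, and the one extra axiom, that $\x_C$ is defined, holding trivially since $x$ is an element of $G(C)$.

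Next I would compute the relevant interpretations in $N$. Using naturality of $\eta$ together with the (contravariant) presheaf structure of $P$ on the summand $\C(-, C)$ (under which $P(f)$ sends $\id_C \in \C(C,C)$ to $f \in \C(D,C)$), we get
\[ \alpha_f(\x_C)^N \;=\; G(f)(x) \;=\; G(f)\bigl(\eta_C(\id_C)\bigr) \;=\; \eta_D\bigl(P(f)(\id_C)\bigr) \;=\; \eta_D(f), \]
where on the right $f$ is viewed as an element of the summand $\C(D, C)$ of $P(D)$, and similarly $\alpha_g(\x_C)^N = \eta_D(g)$. Hence, if $\Tsite(F, \x_C) \vdash \alpha_f(\x_C) = \alpha_g(\x_C)$, then by soundness of partial Horn logic applied to the model $N$ we obtain $\eta_D(f) = \eta_D(g)$; since $\eta_D$ is injective and $f, g$ both lie in the summand $\C(D, C) \subseteq P(D)$, this forces $f = g$ in $\C(D, C)$, i.e.\ $f = g$ as morphisms $D \to C$, as desired.

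The only genuinely delicate point is the claim that the sheafification unit is injective on the separated presheaf $P$ --- which is precisely where Lemma \ref{separatedlemma} and the hypothesis on empty sieves enter --- together with the routine verification that $N = (G, h, x)$ really is a model of the diagram theory $\Tsite(F, \x_C)$ and that soundness of partial Horn logic applies. Everything else is a direct computation with the presheaf structure and naturality of $\eta$.
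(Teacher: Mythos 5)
Your proof is correct and is essentially the paper's own argument: both adjoin a generic point by forming the separated coproduct $F + \yo C$ (via Lemma \ref{separatedlemma}), pass to its one-step sheafification with the element coming from $\id_C$ in the representable summand, and conclude $f = g$ from the fact that provably equal terms have equal interpretations there. The only difference is packaging --- you invoke injectivity of the unit $\eta$ on a separated presheaf and soundness directly, where the paper unwinds the same fact by hand in terms of matching families and the separatedness of $\yo C$.
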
 

\begin{proof}
Assume the hypotheses, let $\yo : \C \to \Set^{\C^\op}$ be the Yoneda embedding, and let $\as : \Set^{\C^\op} \to \Sh(\C, \J)$ be the associated sheaf functor, defined in terms of the plus-construction as $\as(G) := G^{++}$ for any presheaf $G$ (cf. e.g. \cite[Section III.5]{MM}). Since $(\C, \J)$ is subcanonical, we know that $\yo C \in \Sh(\C, \J)$, from which it follows by Lemma \ref{separatedlemma} that the coproduct presheaf $F + \yo C$ is separated. By \cite[Lemma III.5.5]{MM}, it then follows that $(F + \yo C)^+ \in \Sh(\C, \J)$. Recall that for any $D \in \Ob(\C)$, the set $(F + \yo C)^+(D)$ is the set of matching families in the presheaf $F + \yo C$ for covers in $\J(D)$ modulo the equivalence relation which identifies two such matching families $(x_f)_{f \in R}$ and $(y_g)_{g \in S}$ when there is a cover $T \in \J(D)$ with $T \subseteq R \cap S$ and $x_h = y_h$ for all $h \in T$. We then have a canonical natural transformation $\eta : F \to (F + \yo C)^+$ defined as follows: for any $D \in \Ob(\C)$, the function $\eta_D : F(D) \to (F + \yo C)^+(D)$ sends $d \in F(D)$ to the equivalence class of the matching family $(F(f)(d))_{f \in t_D}$, where $t_D \in \J(D)$ is the maximal sieve. 

Since $\Tsite(F, \x_C) \vdash \alpha_f(\x_C) = \alpha_g(\x_C)$ by assumption and $\eta : F \to (F + \yo C)^+$ is a natural transformation and hence a morphism in $\Sh(\C, \J)$, it follows by (the bounded infinitary version of) \cite[Lemma 3.1.2]{thesis} that \[ (F + \yo C)^+(f) = (F + \yo C)^+(g) : (F + \yo C)^+(C) \to (F + \yo C)^+(D). \] The equivalence class $\left[(h)_{h \in t_C}\right]$ of the matching family $(h)_{h \in t_C}$ in $\yo C$ belongs to $(F + \yo C)^+(C)$, and hence we have
\[ (F + \yo C)^+(f)\left(\left[(h)_{h \in t_C}\right]\right) = (F + \yo C)^+(g)\left(\left[(h)_{h \in t_C}\right]\right), \] i.e.
\[ \left[(fh)_{h \in t_D}\right] = \left[(gh)_{h \in t_D}\right] \in (F + \yo C)^+(D). \] So there is a cover $T \in \J(D)$ with $f \circ h = g \circ h$ for every $h \in T$. We also have a matching family $(fh)_{h \in T}$ for $T \in \J(D)$ in $\yo C$. Since $\yo C$ is a sheaf and in particular separated, there is at most one morphism $k : D \to C$ in $\C$ with $k \circ h = f \circ h$ for every $h \in T$. But $f$ and $g$ both satisfy this property, so that we must have $f = g$, as desired.      
\end{proof} 

\begin{lem*}[\ref{purenormalformlemma}]
Let $(\C, \J)$ be a small subcanonical site in which no object is covered by the empty sieve, and let $F \in \Sh(\C, \J)$ and $C \in \Ob(\C)$. For any pure closed term $t \in \Term^c\left(\Sigmasite(F, \x_C)\right)$ with $\Tsite(F, \x_C) \vdash t \downarrow$ and $t : D$ for some $D \in \Ob(\C)$, there is some morphism $f : D \to C$ in $\C$ with \[ \Tsite(F, \x_C) \vdash t = \alpha_f(\x_C). \] 
\end{lem*}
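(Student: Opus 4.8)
The plan is to induct on the structure of the pure closed term $t$, using the normal-form and arrow-equality lemmas already established. Since $t$ is pure it contains no object constant $c_a$, so its only "atomic" ingredient is the constant $\x_C : C$; hence a leaf of $t$ can only be $\x_C$ itself, and every other subterm is built by applying function symbols $\alpha_h$ (for $h \in \Arr(\C)$) or $\sigma_K$ (for $K \in \J$). First I would dispose of the base case: if $t \equiv \x_C$, then $t : C$, so $D = C$, and we may take $f = \id_C$, since $\Tsite(F, \x_C) \vdash \x_C = \alpha_{\id_C}(\x_C)$ by the second group of axioms of $\Tsite$.

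For the inductive step there are two cases according to the outermost function symbol of $t$. If $t \equiv \alpha_h(t')$ for some $h : E \to D$ in $\C$ and some pure closed term $t' : E$, then provable definedness of $t$ forces $\Tsite(F, \x_C) \vdash t' \downarrow$; by the induction hypothesis there is a morphism $g : E \to C$ with $\Tsite(F, \x_C) \vdash t' = \alpha_g(\x_C)$, whence $\Tsite(F, \x_C) \vdash t = \alpha_h(\alpha_g(\x_C)) = \alpha_{g \circ h}(\x_C)$ by the composition axiom, so $f := g \circ h : D \to C$ works. If instead $t \equiv \sigma_K[(t_h)_{h \in K}]$ for some cover $K$, then $K \in \J(D)$ and each $t_h : \dom(h)$ is a pure closed term with $\Tsite(F, \x_C) \vdash t_h \downarrow$ (this follows from the domain of $\sigma_K^{M}$ together with the matching-family axioms, or more directly by applying $\alpha_h$ to $\sigma_K[(t_h)_{h \in K}]$ and using the amalgamation axiom). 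By the induction hypothesis, for each $h \in K$ there is a morphism $f_h : \dom(h) \to C$ with $\Tsite(F, \x_C) \vdash t_h = \alpha_{f_h}(\x_C)$. The matching-family hypothesis of the amalgamation axiom for $\sigma_K$ says that for every $h \in K$ and every $g$ with $\cod(g) = \dom(h)$ we have $\Tsite(F, \x_C) \vdash \alpha_g(t_h) = t_{h \circ g}$, i.e. $\Tsite(F, \x_C) \vdash \alpha_{f_h \circ g}(\x_C) = \alpha_{f_{h \circ g}}(\x_C)$, so by Lemma \ref{arrowequality} we get $f_h \circ g = f_{h \circ g}$; thus $(f_h)_{h \in K}$ is a \emph{genuine} (set-level) matching family for the cover $K \in \J(D)$ in the representable sheaf $\yo C = \C(-, C)$. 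Since $(\C, \J)$ is subcanonical, $\yo C$ is a sheaf, so there is a unique morphism $f : D \to C$ with $f \circ h = f_h$ for all $h \in K$. It then remains to check $\Tsite(F, \x_C) \vdash \sigma_K[(t_h)_{h \in K}] = \alpha_f(\x_C)$: the element $\alpha_f(\x_C)$ satisfies $\Tsite(F, \x_C) \vdash \alpha_h(\alpha_f(\x_C)) = \alpha_{f_h}(\x_C) = t_h$ for every $h \in K$, so by the \emph{uniqueness} axiom for $\sigma_K$ (the last group of axioms of $\Tsite$, with $y := \alpha_f(\x_C)$) we conclude $\Tsite(F, \x_C) \vdash \sigma_K[(t_h)_{h \in K}] = \alpha_f(\x_C)$, completing the induction.

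The main obstacle I anticipate is the $\sigma_K$ case — specifically, verifying cleanly that the family $(f_h)_{h \in K}$ of morphisms extracted from the induction hypothesis really is a matching family in $\yo C$ (so that subcanonicity can be invoked), and that the resulting amalgamated morphism $f$ makes the uniqueness axiom applicable. Both of these hinge on translating the syntactic matching-condition in the hypothesis of the $\sigma_K$-amalgamation axiom into an equality of morphisms via Lemma \ref{arrowequality}, and care is needed because the context $\vec{x}$ in those axioms is indexed over all of $K$; one must argue that the provable definedness of $t = \sigma_K[(t_h)_{h \in K}]$ already supplies exactly the matching-family hypothesis needed to fire the axioms. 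A subsidiary point to handle carefully is that the induction is on term structure, not on any notion of "height," so the recursion on the subterms $t_h$ indexed by the (possibly infinite but $\lambda$-small) cover $K$ is legitimate.
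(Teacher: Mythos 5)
Your proof is correct and follows essentially the same route as the paper's: a structural induction with the same three cases, using Lemma \ref{arrowequality} to convert the provable matching condition on the $t_h$ into a genuine matching family $(f_h)_{h \in K}$ in $\yo C = \C(-,C)$, subcanonicity to amalgamate it to $f : D \to C$, and the uniqueness axiom for $\sigma_K$ to conclude $\Tsite(F, \x_C) \vdash t = \alpha_f(\x_C)$. The only slip is a direction typo in the $\alpha$-case: for $t \equiv \alpha_h(t') : D$ with $t' : E$, the morphism $h$ must run $D \to E$ (since the function symbol $\alpha_h$ goes from sort $\cod(h)$ to sort $\dom(h)$), not $E \to D$; with that correction your composite $g \circ h : D \to C$ is well-typed and is exactly the paper's $f$.
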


\begin{proof}
Assume the hypotheses, and let us prove the claim by induction on the structure of $t \in \Term^c\left(\Sigmasite(F, \x_C)\right)$ with $t$ pure and $\Tsite(F, \x_C) \vdash t \downarrow$.
\begin{itemize}
\item If $t \equiv \x_C : C$, then $\Tsite(F, \x_C) \vdash t \downarrow$ and we clearly have $\Tsite(F, \x_C) \vdash \x_C = \alpha_{\id_C}(\x_C)$.

\item Suppose $t \equiv \alpha_g(t') : D$ for some morphism $g : D \to D'$ in $\C$ and some pure $t' \in \Term^c\left(\Sigmasite(F, \x_C)\right)$ of sort $D' \in \Ob(\C)$, and suppose moreover that $\Tsite(F, \x_C) \vdash \alpha_g(t') \downarrow$, so that $\Tsite(F, \x_C) \vdash t' \downarrow$. Then by the induction hypothesis, there is some morphism $h : D' \to C$ in $\Ob(\C)$ with $\Tsite(F, \x_C) \vdash t' = \alpha_h(\x_C)$, and hence we obtain
\[ \Tsite(F, \x_C) \vdash t = \alpha_g(t') = \alpha_g(\alpha_h(\x_C)) = \alpha_{h \circ g}(\x_C), \] as desired.  

\item Suppose that $t \equiv \sigma_J\left(\left(t_h\right)_{h \in J}\right) : D$ for some $D \in \Ob(\C)$ and $J \in \J(D)$ and pure $t_h \in \Term^c\left(\Sigmasite(F, \x_C)\right)$ of sort $\dom(h)$ for every $h \in J$, and suppose also that $\Tsite(F, \x_C) \vdash \sigma_J\left(\left(t_h\right)_{h \in J}\right) \downarrow$, so that $\Tsite(F, \x_C) \vdash t_h \downarrow$ for every $h \in J$. Moreover, it follows that $\Tsite(F, \x_C) \vdash \alpha_g\left(t_h\right) = t_{h \circ g}$ for every $h \in J$ and $g \in \Arr(\C)$ with $\cod(g) = \dom(h)$, and that $\Tsite(F, \x_C) \vdash \alpha_h(t) = t_h$ for every $h \in J$.  

By the induction hypothesis, we know for every $h \in J$ that there is some morphism $f_h : \dom(h) \to C$ with $\Tsite(F, \x_C) \vdash t_h = \alpha_{f_h}(\x_C)$. So for any $h \in J$ and $g \in \Arr(\C)$ with $\cod(g) = \dom(h)$, we have \[ \Tsite(F, \x_C) \vdash \alpha_{f_h \circ g}(\x_C) = \alpha_g\left(\alpha_{f_h}(\x_C)\right) = \alpha_g(t_h) = t_{h \circ g} = \alpha_{f_{h \circ g}}(\x_C). \] By Lemma \ref{arrowequality}, it then follows that $f_h \circ g = f_{h \circ g}$ for every $h \in J$ and $g \in \Arr(\C)$ with $\cod(g) = \dom(h)$. This means that the family of morphisms $\left(f_h\right)_{h \in J}$ is matching in $\yo C = \C(-, C)$ for the cover $J \in \J(D)$. Since $\yo C$ is a sheaf by assumption, it follows that there is a unique morphism $f : D \to C$ in $\C$ with $f \circ h = f_h$ for every $h \in J$. We now show that $\Tsite(F, \x_C) \vdash \sigma_J\left(\left(t_h\right)_{h \in J}\right) = \alpha_f(\x_C)$, completing the proof. By the axiom of $\Tsite$ expressing the uniqueness of amalgamations of matching families, it suffices to show that
$\Tsite(F, \x_C) \vdash \alpha_h\left(\alpha_f(\x_C)\right) = t_h$ for every $h \in J$. But we have
\[ \Tsite(F, \x_C) \vdash \alpha_h\left(\alpha_f(\x_C)\right) = \alpha_{f \circ h}(\x_C) = \alpha_{f_h}(\x_C) = t_h, \] as desired.           
\end{itemize}  
\end{proof}

\begin{lem*}[\ref{constantlemma}]
Let $(\C, \J)$ be a small site in which no object is covered by the empty sieve, and let $F \in \Sh(\C, \J)$ and $C \in \Ob(\C)$. For any morphism $f \in \Arr(\C)$ with $\cod(f) = C$, there is no $a \in F(\dom(f))$ with $\Tsite(F, \x_C) \vdash \alpha_f(\x_C) = c_a$. 
\end{lem*}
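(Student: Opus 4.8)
The plan is to argue by contradiction: assume $\Tsite(F, \x_C) \vdash \alpha_f(\x_C) = c_a$ for some $a \in F(\dom(f))$, and then exhibit a single model of $\Tsite(F, \x_C)$ in which this equation fails, contradicting soundness of partial Horn logic. The model I would use is the sheafification of the coproduct presheaf $F + \yo C$. Concretely, let $P := F + \yo C$ be the coproduct in $\Set^{\C^{\op}}$, computed pointwise, so that $P(D) = F(D) \sqcup \yo C(D)$ is a \emph{disjoint} union for each $D \in \Ob(\C)$, with coproduct inclusions $\iota_F : F \to P$ and $\iota_{\yo C} : \yo C \to P$; let $G := \as(P) = P^{++}$ with sheafification unit $\rho : P \to G$. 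Since $G$ is a sheaf, it is a model of $\Tsite$ via the isomorphism $\Tsite\mathsf{mod} \cong \Sh(\C, \J)$, and I make it a model of $\Tsite(F, \x_C)$ by interpreting each diagram constant $c_b$ (for $b \in F(B)$) as $\rho_B(\iota_F(b))$ and $\x_C$ as $\rho_C\bigl(\iota_{\yo C}(\id_C)\bigr)$. These are legitimate interpretations: the first because $\rho \circ \iota_F : F \to G$ is a natural transformation of sheaves, so it respects all the axioms of $\Tsite(F)$ describing how the function symbols interact with the $c_b$; the second because $\x_C$ is interpreted by an actual element of $G(C)$, validating $\top \vdash \x_C\downarrow$.

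Granting that $G$ is a model of $\Tsite(F, \x_C)$, soundness gives that $G$ satisfies $\alpha_f(\x_C) = c_a$. Unwinding the interpretations, using naturality of $\rho$, and using $P(f)\bigl(\iota_{\yo C}(\id_C)\bigr) = \iota_{\yo C}(\id_C \circ f) = \iota_{\yo C}(f)$, this says exactly
\[ \rho_{\dom(f)}\bigl(\iota_{\yo C}(f)\bigr) = \rho_{\dom(f)}\bigl(\iota_F(a)\bigr) \in G(\dom(f)) = P^{++}(\dom(f)). \]
The crux is to extract a contradiction from this equality. Since the single plus-construction $P^+$ is always separated, the second unit $P^+ \to P^{++}$ is a monomorphism, so the displayed equality already holds in $P^+(\dom(f))$. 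By the explicit description of the plus-construction recalled in the proof of Lemma \ref{arrowequality} (two sections become equal in $P^+(\dom(f))$ precisely when they agree after restriction along every morphism in some cover), there is a cover $R \in \J(\dom(f))$ with $P(r)\bigl(\iota_{\yo C}(f)\bigr) = P(r)\bigl(\iota_F(a)\bigr)$ for every $r \in R$. But for any $r : D' \to \dom(f)$ we have $P(r)\bigl(\iota_{\yo C}(f)\bigr) = \iota_{\yo C}(f \circ r)$ in the $\yo C$-summand of $P(D')$, while $P(r)\bigl(\iota_F(a)\bigr) = \iota_F\bigl(F(r)(a)\bigr)$ in the $F$-summand, and these summands are disjoint; hence no such $r$ can exist, forcing $R = \varnothing$. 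This contradicts the hypothesis that no object of $\C$ is covered by the empty sieve, and the proof is complete.

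I expect the main obstacle to be this last step — the bookkeeping of the plus-construction needed to show that the pointwise disjointness of $F + \yo C$ \emph{survives sheafification} at the relevant component, i.e. that $\iota_{\yo C}(f)$ and $\iota_F(a)$ stay distinct in $P^{++}(\dom(f))$. The key realizations making it work are that only the first application of the plus-construction can identify sections (because $P^+$ is separated, so $P^+ \hookrightarrow P^{++}$ is monic), and that a first-plus identification forces agreement along a whole cover, which here can only be the empty cover — exactly the configuration ruled out by the hypothesis. The verification that $G = \as(F + \yo C)$ with the stated data really is a model of $\Tsite(F, \x_C)$ is routine given the isomorphism $\Tsite\mathsf{mod} \cong \Sh(\C, \J)$ and the standard description of the diagram theory, and an alternative would be to simply note that this $G$ is (the $\Sigma$-reduct of) the initial model $F\la\x_C\ra$ and that the equation fails there.
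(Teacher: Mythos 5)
Your proof is correct and follows essentially the same strategy as the paper's: both refute the provable equation by soundness, building a sheaf from a pointwise-disjoint coproduct with $F$, interpreting $\x_C$ in the non-$F$ summand, and observing that the purported identification would force the two sections to agree after restriction along every morphism of some cover, which disjointness of the summands reduces to the empty cover, contradicting the hypothesis. The only real variation is that the paper takes $F + \mathds{1}$ with $\mathds{1}$ the terminal sheaf (so a single plus suffices via Lemma \ref{separatedlemma}), whereas you take $F + \yo C$, which need not be separated since this lemma does not assume the site is subcanonical, and you correctly compensate by applying the plus-construction twice and using that $P^{+}$ is always separated, so that $P^{+} \to P^{++}$ is monic and the identification can be traced back to a cover-agreement in $P$ itself.
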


\begin{proof}
Assume the hypotheses, let $f : D \to C$, and suppose towards a contradiction that there were some $a \in F(D)$ with $\Tsite(F, \x_C) \vdash \alpha_f(\x_C) = c_a$. Then for any $G \in \Sh(\C, \J)$ and natural transformation $\gamma : F \to G$, it follows by \cite[Lemma 3.1.2]{thesis} that the function $G(f) : G(C) \to G(D)$ is constant on $\gamma_{D}(a) \in G(D)$. Let $\mathds{1} : \C^\op \to \Set$ be the terminal sheaf constant on the singleton $\{\ast\}$, and consider the coproduct presheaf $F + \mathds{1}$ (assuming without loss of generality that $F$ and $\mathds{1}$ are pointwise disjoint). By Lemma \ref{separatedlemma}, it follows that $F + \mathds{1}$ is separated, so that $(F + \mathds{1})^+$ is a sheaf by \cite[Lemma III.5.5]{MM}. We have a natural transformation $\eta : F \to (F + \mathds{1})^+$ defined as follows: for any $X \in \Ob(\C)$, the function $\eta_X : F(X) \to (F + \mathds{1})^+(X)$ sends $x \in F(X)$ to the equivalence class of the matching family $(F(h)(x))_{h \in t_X}$, where $t_X \in \J(X)$ is the maximal sieve. To obtain a contradiction and complete the proof, it suffices to show that $(F + \mathds{1})^+(f) : (F + \mathds{1})^+(C) \to (F + \mathds{1})^+(D)$ is \emph{not} constant on $\eta_D(a) = \left[(F(h)(a))_{h \in t_D}\right] \in (F + \mathds{1})^+(D)$. We have $\left[(\ast)_{h \in t_C}\right] \in (F + \mathds{1})^+(C)$. If we had
\[ (F + \mathds{1})^+(f)\left(\left[(\ast)_{h \in t_C}\right]\right) = \left[(F(h)(a))_{h \in t_D}\right], \] so that $\left[(F(h)(a))_{h \in t_D}\right] = \left[(\ast)_{h \in t_D}\right]$, then there would be some cover $J \in \J(D)$ with $F(k)(a) = \ast$ for all $k \in J$, which is impossible, since $J \neq \varnothing$ and $F$ and $\mathds{1}$ are disjoint. So $(F + \mathds{1})^+(f)\left(\left[(\ast)_{h \in t_C}\right]\right) \neq \left[(F(h)(a))_{h \in t_D}\right]$, as desired.             
\end{proof}

\begin{lem*}[\ref{normalformlemma}]
Let $(\C, \J)$ be a small site, and let $F \in \Sh(\C, \J)$ and $C \in \Ob(\C)$. For any closed term $t \in \Term^c\left(\Sigmasite(F, \x_C)\right)$ with $\Tsite(F, \x_C) \vdash t \downarrow$ and $t : D$ for some $D \in \Ob(\C)$, there is some cover $J \in \J(D)$ with $\Tsite(F, \x_C) \vdash t = \sigma_J\left(\left(t_h\right)_{h \in J}\right)$, where for any $h \in J$ either $\Tsite(F, \x_C) \vdash t_h = c_a$ for some $a \in F(\dom(h))$ or $\Tsite(F, \x_C) \vdash t_h = \alpha_f(\x_C)$ for some morphism $f : \dom(h) \to C$ in $\C$. 
\end{lem*}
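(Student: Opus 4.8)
The plan is to proceed by induction on the structure of the closed term $t \in \Term^c(\Sigmasite(F, \x_C))$ with $\Tsite(F, \x_C) \vdash t \downarrow$, producing in each case a cover $J \in \J(D)$ together with terms $(t_h)_{h \in J}$ of the required special shape (provably equal to an object constant $c_a$ or to some $\alpha_f(\x_C)$) such that $\Tsite(F, \x_C) \vdash t = \sigma_J((t_h)_{h \in J})$. There are three base/inductive cases, matching the three ways a term of $\Sigmasite(F, \x_C)$ can be built: (i) $t$ is one of the constants, namely $\x_C$ or an object constant $c_a$ from $F$; (ii) $t \equiv \alpha_g(t')$ for some morphism $g : D \to D'$ in $\C$ and closed term $t'$ of sort $D'$; (iii) $t \equiv \sigma_J((t_h)_{h \in J})$ for some $J \in \J(D)$ and closed terms $t_h$ of sort $\dom(h)$.

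For case (i), if $t \equiv \x_C$ take $D = C$, choose the maximal sieve $J = t_C \in \J(C)$, and set $t_h := \alpha_h(\x_C)$ for each $h \in J$ (which has $\dom(h) \to C$, so is of the permitted form); then the sheaf axioms of $\Tsite$ give $\Tsite(F,\x_C) \vdash \sigma_J((\alpha_h(\x_C))_{h\in J}) = \x_C$ since $(\alpha_h(\x_C))_h$ is the canonical matching family amalgamated by $\x_C$. If $t \equiv c_a$ with $a \in F(D)$, similarly take the maximal sieve on $D$ and set $t_h := c_{F(h)(a)}$, using that $(F(h)(a))_{h}$ is matching and amalgamated by $a$ in $F$, which is reflected in $\Tsite(F,\x_C)$ by the diagram axioms. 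For case (ii), $\Tsite(F,\x_C)\vdash \alpha_g(t')\downarrow$ forces $\Tsite(F,\x_C)\vdash t'\downarrow$, so the induction hypothesis yields a cover $J' \in \J(D')$ and special terms $(t'_k)_{k \in J'}$ with $\Tsite(F,\x_C)\vdash t' = \sigma_{J'}((t'_k)_{k\in J'})$; I would then pull $J'$ back along $g$ to a cover $J := g^*(J') \in \J(D)$ (by the stability axiom for Grothendieck topologies) and, for $h \in J$, define $t_h := \alpha_{g'}(t'_{g''})$ where $g\circ h = g''\circ g'$ exhibits $g\circ h$ as factoring through a member $g''$ of $J'$, and then simplify $\alpha_{g'}(t'_{g''})$ using the induction hypothesis' classification of $t'_{g''}$ together with the functoriality axioms $\alpha_{g'}(\alpha_f(\x_C)) = \alpha_{f\circ g'}(\x_C)$ and $\alpha_{g'}(c_a) = c_{F(g')(a)}$ to land back in the permitted form; the identity $\Tsite(F,\x_C)\vdash \alpha_g(\sigma_{J'}(\ldots)) = \sigma_{J}((t_h)_h)$ follows from the uniqueness-of-amalgamation axiom by checking $\Tsite(F,\x_C)\vdash\alpha_h(\alpha_g(\sigma_{J'}(\ldots))) = t_h$ for each $h\in J$, which reduces to the matching-family equations available from the induction hypothesis. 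Case (iii) is the easiest: apply the induction hypothesis to each $t_h$ (which is provably defined since $\sigma_J((t_h)_h)$ is) to get covers $J_h \in \J(\dom(h))$ and special terms, then take the composite cover $J' := \{\, h\circ k : h \in J,\ k \in J_h \,\} \in \J(D)$ (transitivity of the topology) and reindex, using the compatibility $\Tsite(F,\x_C)\vdash \alpha_k(\sigma_{J_h}(\ldots)) = (\text{the } k\text{-component})$ together with the $\sigma$-axioms to see that the amalgamation is unchanged.

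The main obstacle I anticipate is the bookkeeping in case (ii) — correctly organizing the pullback cover $g^*(J')$, choosing for each $h \in J$ a factorization of $g\circ h$ through $J'$, verifying that the resulting $t_h$ are well-defined up to provable equality (independent of the chosen factorization, using Lemma \ref{arrowequality} and the classification of the $t'_{g''}$), and then discharging the equality $\Tsite(F,\x_C)\vdash \alpha_g(t) = \sigma_{g^*(J')}((t_h)_h)$ via the uniqueness axiom. The case (iii) composite-cover construction has similar bookkeeping but is more routine since no factorization choices are forced. Throughout, the only genuine inputs beyond the Grothendieck-topology axioms (stability, transitivity) and the $\Tsite$-axioms are the earlier Lemma \ref{arrowequality} (to know that provable equality of $\alpha$-terms reflects equality of morphisms, needed for well-definedness) and the basic fact that $F$ being a sheaf means its diagram theory $\Tsite(F,\x_C)$ proves exactly the matching/amalgamation equations one expects.
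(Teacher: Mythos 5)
Your proposal is correct and follows essentially the same route as the paper's proof: structural induction on $t$, using the maximal sieve for the base cases $\x_C$ and $c_a$, the pullback sieve $g^*J'$ for $\alpha_g(t')$, and the composite sieve $K = \{h \circ k : h \in J,\ k \in J_h\}$ (via the transitivity axiom) for $\sigma_J\left(\left(t_h\right)_{h \in J}\right)$. The only misjudgement is where the difficulty lies: since covers are sieves, $g \circ h$ already belongs to $J'$ for every $h \in g^*J'$, so no factorization choice arises in your case (ii), whereas the step you call routine --- well-definedness of $s_{h \circ k} := t_{h,k}$ when $h \circ k = h' \circ k'$ admits several decompositions --- is the genuinely delicate point, which the paper handles by comparing restrictions along a common refining cover $T_{h,k} \cap T_{h',k'}$ and appealing to the uniqueness-of-amalgamation axioms.
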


\begin{proof}
Assume the hypotheses. The (final case of the) following proof is essentially a syntactic version of the proof of \cite[Lemma III.5.5]{MM}. We prove the desired result by induction on the structure of $t \in \Term^c\left(\Sigmasite(F, \x_C)\right)$ with $\Tsite(F, \x_C) \vdash t \downarrow$.
\begin{itemize}
\item If $t \equiv \x_C : C$, so that $\Tsite(F, \x_C) \vdash t \downarrow$, then we have the maximal sieve $t_C \in \J(C)$ with $\Tsite(F, \x_C) \vdash \alpha_f(\x_C) \downarrow$ for every $f \in t_C$, and for any $f \in t_C$ and $g \in \Arr(\C)$ with $\cod(g) = \dom(f)$ we have $\Tsite(F, \x_C) \vdash \alpha_g(\alpha_f(\x_C)) = \alpha_{f \circ g}(\x_C)$. It then follows that \[ \Tsite(F, \x_C) \vdash \sigma_{t_C}\left(\left(\alpha_f(\x_C)\right)_{f \in t_C}\right) \downarrow\] and \[ \Tsite(F, \x_C) \vdash \alpha_f\left(\sigma_{t_C}\left(\left(\alpha_f(\x_C)\right)_{f \in t_C}\right)\right) = \alpha_f(\x_C) \] for every $f \in t_C$, so that $\Tsite(F, \x_C) \vdash \x_C = \sigma_{t_C}\left(\left(\alpha_f(\x_C)\right)_{f \in t_C}\right)$ by the uniqueness of amalgamations, as desired.  

\item If $t \equiv c_a : D$ for some $D \in \Ob(\C)$ and $a \in F(D)$, then the reasoning is identical to the first case.

\item Suppose that $t \equiv \alpha_k(s) : E$ for some morphism $k : E \to D$ in $\C$ and some term $s \in \Term^c\left(\Sigmasite(F, \x_C)\right)$ with $s : D$, and suppose that $\Tsite(F, \x_C) \vdash t \downarrow$, which implies that $\Tsite(F, \x_C) \vdash s \downarrow$. By the induction hypothesis, there is some cover $J \in \J(D)$ with $\Tsite(F, \x_C) \vdash s = \sigma_J\left(\left(s_h\right)_{h \in J}\right)$, where for any $h \in J$ either $\Tsite(F, \x_C) \vdash s_h = c_a$ for some $a \in F(\dom(h))$ or $\Tsite(F, \x_C) \vdash s_h = \alpha_f(\x_C)$ for some morphism $f : \dom(h) \to C$ in $\C$.  

We then have the pullback sieve $k^*J \in \J(E)$ consisting of those morphisms $f$ with codomain $E$ for which $k \circ f \in J$, with $\Tsite(F, \x_C) \vdash \alpha_f(\alpha_k(s)) \downarrow$ for every $f \in k^*J$, and for any $f \in k^*J$ and $g \in \Arr(\C)$ with $\cod(g) = \dom(f)$ we have $\Tsite(F, \x_C) \vdash \alpha_g(\alpha_f(\alpha_k(s))) = \alpha_{f \circ g}(\alpha_k(s))$. It then follows that \[ \Tsite(F, \x_C) \vdash \sigma_{k^*J}\left(\left(\alpha_f(\alpha_k(s))\right)_{f \in k^*J}\right) \downarrow\] and \[ \Tsite(F, \x_C) \vdash \alpha_f\left(\sigma_{k^*J}\left(\left(\alpha_f(\alpha_k(s))\right)_{f \in k^*J}\right)\right) = \alpha_f(\alpha_k(s)) \] for every $f \in k^*J$, so that \[ \Tsite(F, \x_C) \vdash \alpha_k(s) = \sigma_{k^*J}\left(\left(\alpha_f(\alpha_k(s))\right)_{f \in k^*J}\right) = \sigma_{k^*J}\left(\left(\alpha_{k \circ f}(s)\right)_{f \in k^*J}\right)  \] by the uniqueness of amalgamations. But for any $f \in k^*J$ we have $k \circ f \in J$, so that $\Tsite(F, \x_C) \vdash \alpha_{k \circ f}(s) = s_{k \circ f}$, and hence by the induction hypothesis it follows that $t \equiv \alpha_k(s)$ is provably equal to a term of the desired form.   

\item Lastly, suppose that $t \equiv \sigma_J\left(\left(t_h\right)_{h \in J}\right) : D$ for some $D \in \Ob(\D)$ and some cover $J \in \J(D)$, with $t_h \in \Term^c\left(\Sigmasite(F, \x_C)\right)$ of sort $\dom(h)$ for every $h \in J$. Assume that $\Tsite(F, \x_C) \vdash t \downarrow$, so that $\Tsite(F, \x_C) \vdash t_h \downarrow$ and $\Tsite(F, \x_C) \vdash \alpha_h(t) = t_h$ for every $h \in J$, and $\Tsite(F, \x_C) \vdash \alpha_g(t_h) = t_{h \circ g}$ for every $h \in J$ and $g \in \Arr(\C)$ with $\cod(g) = \dom(h)$. 

By the induction hypothesis, we know for every $h \in J$ that there are some cover $J_h \in \J(\dom(h))$ and terms $t_{h, k} \in \Term^c\left(\Sigmasite(F, \x_C)\right)$ of sort $\dom(k)$ for every $k \in J_h$ with $\Tsite(F, \x_C) \vdash t_h = \sigma_{J_h}\left(\left(t_{h, k}\right)_{k \in J_h}\right)$, and each $t_{h, k}$ is provably equal to an object constant of $F$ or a term of the form $\alpha_\ell(\x_C)$ for some morphism $\ell : \dom(k) \to C$ of $\C$. 

By the reasoning in the previous case, for every $h \in J$ and $g \in \Arr(\C)$ with $\cod(g) = \dom(h)$ we have that $\Tsite(F, \x_C)$ proves the equations
\begin{align*}
&\ \ \ \ \sigma_{J_{h \circ g}}\left(\left(t_{h \circ g, k}\right)_{k \in J_{h \circ g}}\right) \\ 
&= t_{h \circ g} \\ 
&= \alpha_g(t_h) \\ 
&= \alpha_g\left(\sigma_{J_h}\left(\left(t_{h, k}\right)_{k \in J_h}\right)\right) \\ 
&= \sigma_{g^*J_h}\left(\left(t_{h, g \circ k}\right)_{k \in g^*J_h}\right).
\end{align*}
Considering the cover $T_{h, g} := J_{h \circ g} \cap g^*J_h \in \J(\dom(g))$, for any $k \in T_{h, g}$ we then have that $\Tsite(F, \x_C)$ proves
\[ t_{h \circ g, k} = \alpha_k\left(\sigma_{J_{h \circ g}}\left(\left(t_{h \circ g, k}\right)_{k \in J_{h \circ g}}\right)\right) = \alpha_k\left(\sigma_{g^*J_h}\left(\left(t_{h, g \circ k}\right)_{k \in g^*J_h}\right)\right) = t_{h, g \circ k}. \]

Now consider the sieve $K := \left\{h \circ k \colon h \in J, k \in J_h\right\}$ on $D$. Since $J \in \J(D)$ and $J_h \in \J(\dom(h))$ for all $h \in J$, it follows by the transitivity axiom for a Grothendieck topology that $K \in \J(D)$. For any $h \in J$ and $k \in J_h$, we have $\Tsite(F, \x_C) \vdash t_{h, k} \downarrow$. We now claim that if $h, h' \in J$ and $k \in J_h, k' \in J_{h'}$ with $h \circ k = h' \circ k'$, then $\Tsite(F, \x_C) \vdash t_{h, k} = t_{h', k'}$. Indeed, for any $g \in T_{h, k} \cap T_{h', k'}$ we have 
\[ \Tsite(F, \x_C) \vdash \alpha_g\left(t_{h, k}\right) = t_{h, k \circ g} = t_{h \circ k, g} = t_{h' \circ k', g} = t_{h', k' \circ g} = \alpha_g\left(t_{h', k'}\right). \] Since $T_{h, k} \cap T_{h', k'} \in \J(\dom(k)) = \J(\dom(k'))$, we then conclude from the fourth group of axioms for $\Tsite$ that $\Tsite(F, \x_C) \vdash t_{h, k} = t_{h', k'}$, as desired. 

For any $h \in J$ and $k \in J_h$, we now set $s_{h \circ k} := t_{h, k}$, which is well-defined up to provable equality in $\Tsite(F, \x_C)$ by what we just showed. For any $g \in \Arr(\C)$ with $\cod(g) = \dom(h \circ k)$ we also have 
\[ \Tsite(F, \x_C) \vdash \alpha_g(s_{h \circ k}) = \alpha_g(t_{h, k}) = t_{h, k \circ g} = s_{h \circ (k \circ g)} = s_{(h \circ k) \circ g}, \] which entails that $\Tsite(F, \x_C) \vdash \sigma_K\left(\left(s_{h \circ k}\right)_{h \circ k \in K}\right) \downarrow$. Given that each $s_{h \circ k} \equiv t_{h, k}$ has the desired form by the induction hypothesis, it remains to prove that $\Tsite(F, \x_C) \vdash \sigma_J\left(\left(t_h\right)_{h \in J}\right) = \sigma_K\left(\left(s_{h \circ k}\right)_{h \circ k \in K}\right)$. By the uniqueness of amalgamations, it suffices to show for any $h' \in J$ that $\Tsite(F, \x_C) \vdash t_{h'} = \alpha_{h'}\left(\sigma_K\left(\left(s_{h \circ k}\right)_{h \circ k \in K}\right)\right)$, i.e. that $\Tsite(F, \x_C) \vdash \sigma_{J_{h'}}\left(\left(t_{h', k}\right)_{k \in J_{h'}}\right) = \alpha_{h'}\left(\sigma_K\left(\left(s_{h \circ k}\right)_{h \circ k \in K}\right)\right)$. And to show \emph{this}, it again suffices by uniqueness of amalgamations to show for any $k \in J_{h'}$ that $\Tsite(F, \x_C) \vdash t_{h', k} = \alpha_k\left(\alpha_{h'}\left(\sigma_K\left(\left(s_{h \circ k}\right)_{h \circ k \in K}\right)\right)\right)$, which just follows because $h' \circ k \in K$ and $\Tsite(F, \x_C) \vdash s_{h' \circ k} = t_{h', k}$. This completes the proof.     
\end{itemize} 
\end{proof}

\begin{lem*}[\ref{invertiblelemma}]
Let $(\C, \J)$ be a small subcanonical site in which no object is covered by the empty sieve, and let $F \in \Sh(\C, \J)$ and $C \in \Ob(\C)$. For any closed term $t \in \Term^c\left(\Sigmasite(F, \x_C)\right)$ with $\Tsite(F, \x_C) \vdash t \downarrow$ and $t : C$, if there is some term $s \in \Term^c\left(\Sigmasite(F, \x_C)\right)$ with $\Tsite(F, \x_C) \vdash s \downarrow$ and $s : C$ and $\Tsite(F, \x_C) \vdash t[s/\x_C] = \x_C$, then there is a \textbf{pure} term $t' \in \Term^c\left(\Sigmasite(F, \x_C)\right)$ with $\Tsite(F, \x_C) \vdash t = t'$.  
\end{lem*}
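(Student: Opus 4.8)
The plan is to combine the normal form of Lemma~\ref{normalformlemma} with the ``no constant'' result of Lemma~\ref{constantlemma}, using the right inverse $s$ to eliminate the object-constant case. First I would apply Lemma~\ref{normalformlemma} (with $D = C$) to obtain a cover $J \in \J(C)$ with $\Tsite(F,\x_C) \vdash t = \sigma_J\big((t_h)_{h\in J}\big)$, where for each $h \in J$ either $\Tsite(F,\x_C) \vdash t_h = c_{a_h}$ for some $a_h \in F(\dom(h))$, or $\Tsite(F,\x_C) \vdash t_h = \alpha_{f_h}(\x_C)$ for some morphism $f_h : \dom(h) \to C$. By Lemma~\ref{constantlemma} these alternatives are mutually exclusive, so $J$ splits as $J = J_c \sqcup J_m$ accordingly. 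The family $(t_h)_{h\in J}$ is matching (this is established in the course of proving Lemma~\ref{normalformlemma}), and since $\Tsite(F,\x_C) \vdash t \downarrow$ we have $\Tsite(F,\x_C) \vdash \sigma_J\big((t_h)_{h\in J}\big)\downarrow$. The goal is then to show $J_c = \varnothing$.

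To that end, suppose $h_0 \in J_c$, so $\Tsite(F,\x_C) \vdash t_{h_0} = c_a$ for some $a \in F(\dom(h_0))$. Applying the standard substitution rules of partial Horn logic to $\Tsite(F,\x_C) \vdash t[s/\x_C] = \x_C$ (legitimate since $\Tsite(F,\x_C)\vdash s\downarrow$ and $s : C$) gives $\Tsite(F,\x_C) \vdash \sigma_J\big((t_h[s/\x_C])_{h\in J}\big) = \x_C$, and in particular this term is provably defined. Substituting $s$ for $\x_C$ in the matching equations for $(t_h)_{h\in J}$ shows that $(t_h[s/\x_C])_{h\in J}$ is again matching, so the first $\sigma_J$-axiom of $\Tsite$ gives $\Tsite(F,\x_C) \vdash \alpha_{h_0}\big(\sigma_J((t_h[s/\x_C])_{h\in J})\big) = t_{h_0}[s/\x_C]$. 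But $c_a$ contains no occurrence of $\x_C$, so $t_{h_0}[s/\x_C]$ is provably equal to $c_a$; applying $\alpha_{h_0}$ to both sides of $\sigma_J\big((t_h[s/\x_C])_{h\in J}\big) = \x_C$ then yields $\Tsite(F,\x_C) \vdash \alpha_{h_0}(\x_C) = c_a$ with $\cod(h_0) = C$, contradicting Lemma~\ref{constantlemma}. Hence $J_c = \varnothing$.

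It remains to conclude exactly as in the proof of Lemma~\ref{purenormalformlemma}. Since now $J_m = J$, each $t_h$ satisfies $\Tsite(F,\x_C) \vdash t_h = \alpha_{f_h}(\x_C)$, and for $h \in J$ and $g$ with $\cod(g) = \dom(h)$ the matching condition on $(t_h)_{h\in J}$ together with Lemma~\ref{arrowequality} gives $f_h \circ g = f_{h\circ g}$; thus $(f_h)_{h\in J}$ is a matching family for $J$ in the representable $\yo C$, which is a sheaf because $(\C,\J)$ is subcanonical, so there is a morphism $f : C \to C$ with $f \circ h = f_h$ for all $h \in J$. Then $\Tsite(F,\x_C) \vdash \alpha_h(\alpha_f(\x_C)) = \alpha_{f\circ h}(\x_C) = \alpha_{f_h}(\x_C) = t_h$ for every $h \in J$ and $\alpha_f(\x_C)$ is provably defined, so the uniqueness ($\sigma_J$-amalgamation) axiom of $\Tsite$ yields $\Tsite(F,\x_C) \vdash \sigma_J\big((t_h)_{h\in J}\big) = \alpha_f(\x_C)$, whence $\Tsite(F,\x_C) \vdash t = \alpha_f(\x_C)$; taking $t' := \alpha_f(\x_C)$, which is pure, finishes the proof.

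The main obstacle is the second paragraph: one must see that the right inverse $s$ is exactly what rules out the constant case, via the interplay between substitution into $t[s/\x_C] = \x_C$, the $\sigma_J$-axioms (which requires keeping track of the fact that the normal-form pieces $(t_h)_{h\in J}$ form a matching family, and that this is preserved under substitution), and Lemma~\ref{constantlemma}. The remaining bookkeeping — that syntactic substitution of the provably-defined closed term $s$ for the constant $\x_C$ preserves provable equalities in $\Tsite(F,\x_C)$, and that $c_a[s/\x_C] \equiv c_a$ — is routine manipulation of partial Horn logic.
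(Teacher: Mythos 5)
Your proof is correct and follows essentially the same route as the paper: apply Lemma \ref{normalformlemma}, then use the right inverse $s$ together with the fact that $c_a[s/\x_C] \equiv c_a$ and Lemma \ref{constantlemma} to rule out the constant case for each $t_h$. The only difference is your final paragraph, which is unnecessary: once every $t_h$ is provably equal to some $\alpha_{f_h}(\x_C)$, the term $\sigma_J\left(\left(\alpha_{f_h}(\x_C)\right)_{h \in J}\right)$ is already pure, so there is no need to amalgamate the $f_h$ into a single morphism $f : C \to C$ (that stronger normal form is the content of Lemma \ref{purenormalformlemma}, which is applied separately in Proposition \ref{isotropyprop}).
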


\begin{proof}
Assume the hypotheses. By Lemma \ref{normalformlemma}, there is a cover $J \in \J(C)$ with $\Tsite(F, \x_C) \vdash t = \sigma_J\left(\left(t_h\right)_{h \in J}\right)$ for some terms $t_h \in \Term^c\left(\Sigmasite(F, \x_C)\right)$ of sort $\dom(h)$ for all $h \in J$ satisfying the conditions of Lemma \ref{normalformlemma}. Now fix $h \in J$. By assumption, we have
\[ \Tsite(F, \x_C) \vdash \alpha_h(\x_C) = \alpha_h\left(t[s/\x_C]\right) = \alpha_h\left(\sigma_J\left(\left(t_h[s/\x_C]\right)_{h \in J}\right)\right) = t_h[s/\x_C]. \] We now show that there can be no object $a \in F(\dom(h))$ with $\Tsite(F, \x_C) \vdash t_h = c_a$. For if there were, then we would have $\Tsite(F, \x_C) \vdash \alpha_h(\x_C) = t_h[s/\x_C] = c_a[s/\x_C] = c_a$, contradicting Lemma \ref{constantlemma}. Then by Lemma \ref{normalformlemma} we must have $\Tsite(F, \x_C) \vdash t_h = \alpha_{f_h}(\x_C)$ for some morphism $f_h : \dom(h) \to C$ in $\C$ (for every $h \in J$). We therefore have
\[ \Tsite(F, \x_C) \vdash t = \sigma_J\left(\left(t_h\right)_{h \in J}\right) = \sigma_J\left(\left(\alpha_{f_h}(\x_C)\right)_{h \in J}\right), \] with the latter term being pure.    
\end{proof}


\begin{thebibliography}{1}

\bibitem{LPAC}
Adamek, J., Rosicky, J.:
\emph{Locally Presentable and Accessible Categories}.
London Mathematical Society Lecture Note Series 189, Cambridge University Press (1994).

\bibitem{Berg}
van den Berg, B., Moerdijk, I.:
Aspects of predicative algebraic set theory III: sheaves.
Proc. Lond. Math. Soc. Vol. 105, Issue 5, 1076-1122 (2012).

\bibitem{Bergman}
Bergman, G.: 
An inner automorphism is only an inner automorphism, but an inner endomorphism can be
something strange. 
Publicacions Matematiques 56, 91-126 (2012).

\bibitem{Borceux}
Borceux, F.:
\emph{Handbook of Categorical Algebra 3: Categories of Sheaves}.
Encyclopedia of Mathematics and its Applications 52, Cambridge University Press, Cambridge (1994).

\bibitem{Bridge}
Bridge, P.:
Essentially algebraic theories and localizations in toposes and abelian categories.
PhD Thesis, University of Manchester (2012).

\bibitem{Funk}
Funk, J., Hofstra, P., Steinberg, B.: 
Isotropy and crossed toposes. 
Theor. Appl. Cat. 26, 660-709 (2012).

\bibitem{MFPSpaper}
Hofstra, P., Parker, J., Scott, P. J.:
Isotropy of algebraic theories. 
Electron. Notes Theor. Comput. Sci. 341, 201-217 (2018).

\bibitem{MM}
Mac Lane, S., Moerdijk, I.:
\emph{Sheaves in Geometry and Logic: A First Introduction to Topos Theory}.
Springer-Verlag New York Inc. (1992). 

\bibitem{Horn}
Palmgren, E., Vickers, S. J.: 
Partial Horn logic and cartesian categories.
Ann. Pure Appl. Log. 145, 314-353 (2007).

\bibitem{thesis}
Parker, J.: 
Isotropy groups of quasi-equational theories.
PhD Thesis, University of Ottawa (2020).

\end{thebibliography}
\end{document}